    \newcommand{\itemref}[2]{\cref{#1}\ref{#1:#2}}
\theoremstyle{plain}
\newtheorem{theorem}{Theorem}[section]
\newtheorem{proposition}[theorem]{Proposition}
\newtheorem{lemma}[theorem]{Lemma}
\newtheorem{corollary}[theorem]{Corollary}
\newtheorem*{question}{Question}
\newtheorem*{main}{Main Theorem}
\theoremstyle{definition}
\newtheorem{remark}[theorem]{Remark}
\newtheorem{definition}[theorem]{Definition}
\newtheorem{example}[theorem]{Example}
\newcommand{\tol}{\tfrac{2}{\eta}}
\newcommand{\taus}[1]{\llbracket #1 \rrbracket}
\newcommand{\eqv}[1]{\;\equiv_{(#1)}\,}
\newcommand{\F}{\mathbb{F}}
\newcommand{\Z}{\mathbb{Z}}
\newcommand{\I}{\mathcal{I}}
\newcommand{\dash}{\nobreakdash-\hspace{0pt}}
\DeclareMathOperator{\Aut}{Aut}
\DeclareMathOperator{\ad}{ad}
\DeclareMathOperator{\Sym}{Sym}
\newenvironment{case}[1]{{\medskip\bfseries\noindent Case #1}.\em}{\unskip\smallskip}
\newenvironment{step}[1]{{\medskip\bfseries\noindent Step #1}.\em}{\unskip\smallskip}
\numberwithin{equation}{section}
\begin{document}

\title{Primitive 4-generated axial algebras of Jordan type}

\author[Tom De Medts, Louis Rowen, Yoav Segev]{Tom De Medts\quad Louis Rowen\quad Yoav Segev}
%\author{Tom De Medts}
%\author{Louis Rowen}
%\author{Yoav Segev}
\address{Tom De Medts\\
        Department of Mathematics: Algebra and Geometry\\
        Ghent University\\
        Krijgslaan 281 – S25, 9000 Gent, Belgium}
\email{tom.demedts@ugent.be}

\address{Louis Rowen\\
         Department of Mathematics\\
         Bar-Ilan University\\
         Ramat Gan\\
         Israel}
\email{rowen@math.biu.ac.il}
\address{Yoav Segev \\
         Department of Mathematics \\
         Ben-Gurion University \\
         Beer-Sheva 84105 \\
         Israel}
\email{yoavs@math.bgu.ac.il}
%\thanks{$^*$The first author was supported by the Israel Science Foundation grant 1623/16}

\date{\today}

\begin{abstract}
    We show that primitive $4$-generated axial algebras of Jordan type are at most 81-dimensional.
\end{abstract}

\maketitle

\section{Introduction}

Axial algebras were introduced in 2015 by Jonathan Hall, Felix Rehren and Sergey Shpectorov \cite{HRSdef}.
They are non-associative commutative algebras generated by \emph{axes}, i.e., idempotents for which the left multiplication operator is semisimple and such that the resulting eigenspaces multiply according to a given \emph{fusion law} (see \S 2 for precise definitions).

In the easiest interesting case, these multiplication operators admit precisely $3$ eigenvalues $0$, $1$ and $\eta$.
A typical example is provided by \emph{Jordan algebras}, where each idempotent gives rise to a \emph{Peirce decomposition} of the algebra. In this case, we have $\eta = \tfrac{1}{2}$, and the fusion law is the following.

\begin{table}[ht!]
	\renewcommand{\arraystretch}{1.5}
	\[\begin{array}{c|ccc}
		* & 1 & 0 & \eta \\ \hline
		1 & \{1\} & \emptyset & \{\eta\} \\
		0 & \emptyset & \{0\} & \{\eta\} \\
		\eta & \{\eta\} & \{\eta\} & \{1,0\}
	\end{array}\]
	\caption{The Jordan fusion law $\Phi(\eta)$}
	\label{tb:jordan}
\end{table}

We call the axial algebras with a fusion law $\Phi(\eta)$ \emph{axial algebras of Jordan type $\eta$}.
Other than Jordan algebras themselves, there are other interesting examples of axial algebras of Jordan type (for arbitrary values of $\eta \neq 0,1$), namely the \emph{Matsuo algebras} arising from $3$-transposition groups.
In this case, the dimension of the algebra is equal to the size of the normal generating set of $3$-transpositions of the group. (See \cref{ex:matsuo} below for details.)

The classification of $3$-transposition groups has a long history (see \cite{CH, H} and the references therein).
It is a highly non-trivial fact that finitely generated $3$-transposition groups are finite.
In fact, this is a consequence of the classification of finite simple groups, and a direct proof of this fact would be very valuable.
(See \cite[Theorem (1.3), p.\@~153]{CH}.)

One possible approach for such a direct proof is precisely via the corresponding Matsuo algebras.
More generally, we ask the following question. (We refer to Definition \ref{def:PAJ} below for the precise meaning.)
\begin{question}
    Let $A$ be a primitive axial algebra of Jordan type.
    Assume that $A$ is generated by a finite set of axes.
    Can we conclude that $A$ is finite-dimensional?
\end{question}
Notice that, by Corollary \ref{cor:fg} below, a positive answer to this question would show, in particular, that finitely generated $3$-transposition groups are finite. In fact, for $\eta \neq \tfrac{1}{2}$, it is equivalent.

%Let $D:=\Delta^G$ be the  consider $\eta\neq\tfrac{1}{2},$ then, considering 

It is natural to try to answer this question for an increasing number of axes. For $2$-generated primitive axial algebras of Jordan type, this is almost trivial: such algebras are at most $3$-dimensional.
(In fact, much more can be said: \cite[Theorem 1.1]{HRS} gives a complete classification of such algebras.)

For $3$-generated algebras, this question was answered affirmatively in the recent paper \cite{GS}: such algebras are at most $9$-dimensional.

Our main result is the following.
\begin{main}
        Primitive $4$-generated axial algebras of Jordan type $\eta$ are at most $81$-dimensional,
				for any $\eta.$ Moreover, this result is best possible.
\end{main}

To go from $3$-generated to $4$-generated primitive axial algebras of Jordan type is a large step that required substantial new ideas. In fact, in our new setup, it is almost a triviality to recover the earlier result from \cite{GS} that such $3$-generated algebras are at most $9$-dimensional.
One of the key ideas is that we will almost never use the actual multiplication in the algebra, but instead, we use \emph{sequences of Miyamoto involutions} (see \cref{def:taus} below).
These sequences will allow us to formulate many ``rewriting rules'' that we can use to systematically deal with larger and larger expressions, until we eventually ``wrap up'' so that we can reduce every possible expression of length larger than~$6$. The precise meaning of this will be explained below and can be seen in \cref{thm:gamma}, which is a more detailed version of our Main Theorem.

It is worth pointing out that going to the next step, primitive $5$-generated axial algebras of Jordan type, is expected to be increasingly more difficult, because the upper bound of the dimension will be at least $3^{12} = 531441$. (In fact, this is our conjectured upper bound.) In addition, one of the examples (of dimension $306936$) arises from the largest sporadic Fischer group $\mathrm{Fi}_{24}$.

\section{Primitive axial algebras of Jordan type}

Throughout the paper, $\F$ will be a commutative field with $\operatorname{char} \F \neq 2$.
All our algebras will be commutative but non-associative\footnote{As usual, non-associative means ``not necessarily associative''.} $\F$-algebras.

For the definition of fusion laws and axial algebras, we rely on \cite{DMPSVC}.
\begin{definition}
    \begin{enumerate}
        \item 
            A \emph{fusion law} is a pair $(X, *),$ where $X$ is a set and $*$ is a map from $X \times X$ to $2^X$, where $2^X$ denotes the power set of $X$.
            A fusion law $(X, *)$ is called \emph{symmetric} if $x * y = y * x$ for all $x,y \in X$.
        \item
            The \emph{Jordan fusion law} is the fusion law with $X = \{ 0, 1, \eta \}$ (where $\eta$ is just a symbol) and with $*$ given by \cref{tb:jordan} above.
    \end{enumerate}
\end{definition}
\begin{definition}
    Let $\Phi = (X, *)$ be a fusion law.
    \begin{enumerate}
        \item
          A \emph{$\Phi$-decomposition} of an algebra $A$ is a direct sum decomposition $A = \bigoplus_{x \in X} A_x$ (as vector spaces) such that $A_x A_y \subseteq A_{x * y}$ for all $x,y \in X$, where $A_{Y} := \bigoplus_{y \in Y} A_y$ for all $Y \subseteq X$.
        \item
            A \emph{$\Phi$-decomposition algebra} is a triple $(A, \I, \Omega)$ 
            where $A$ is an $\F$-algebra,
            $\I$ is an index set and $\Omega$ is a tuple of $\Phi$-decompositions of $A$ indexed by $\I$.
            In other words, for each $i \in \I$, we have a corresponding $\Phi$\dash decomposition $A = \bigoplus_{x \in X} A^{(i)}_x$ of the algebra $A$.
    \end{enumerate}
\end{definition}
\begin{definition}\label{def:PAJ}
    Let $\Phi = (X, *)$ be a fusion law with $1 \in X \subseteq \F$.
    \begin{enumerate}
        \item For each $a \in A$, we write $\ad_a$ for the left multiplication by $a$, i.e., $\ad_a \colon A \to A \colon x \mapsto ax$.
        \item An element $a \in A$ is called a \emph{$\Phi$-axis} if it is idempotent (i.e., $a^2 = a$) and the decomposition of $A$ into the eigenspaces for $\ad_a$ is a $\Phi$\dash decomposition.
        \item The algebra $A$ is a \emph{$\Phi$-axial algebra} if it is generated by a set of $\Phi$-axes. This makes $A$ into a $\Phi$-decomposition algebra (with $\I$ identified with the given set of axes).
        \item A $\Phi$-axial algebra $A$ is \emph{primitive} if for each axis $a$ of the generating $\Phi$-axes of $A,$ the $1$-eigenspace $A^{(a)}_1$ is $1$-dimensional, i.e., is equal to $\F a$.
        \item An \emph{axial algebra of Jordan type $\eta$} is a $\Phi$-axial algebra for the fusion law $\Phi = \Phi(\eta)$ as in \cref{tb:jordan}.
    \end{enumerate}
\end{definition}

As we mentioned in the introduction, the two main sources of examples of axial algebras of Jordan type are (1) Jordan algebras, and (2) Matsuo algebras. We give some details.
\begin{example}
    Let $J$ be a Jordan algebra over $\F$, i.e., $J$ is a unital commutative non-associative algebra such that $a^2(ab) = a(a^2b)$ for all $a,b \in J$. If $e \in J$ is an idempotent, then it is an axis for the Jordan fusion law $\Phi(\tfrac{1}{2})$; this is the famous \emph{Peirce decomposition} for Jordan algebras (see, e.g., \cite[Chapter III]{Jac}).
    In particular, if $J$ is generated by idempotents, then it is an axial algebra of Jordan type $\tfrac{1}{2}$.
\end{example}
\begin{example}\label{ex:matsuo}
    Let $(G, D)$ be a \emph{$3$-transposition group}, i.e., $G$ is a group and $D \subseteq G$ is a generating set of involutions, closed under conjugation in $G$, such that the product of any two elements in $D$ has order at most $3$.
    Let $\eta \in \F \setminus \{ 0,1 \}$ be arbitrary.
    Then the \emph{Matsuo algebra} $M_\eta(G,D)$ is the algebra with basis $D$, with multiplication given by
    \[
        de := \begin{cases}
            e & \text{ if } d = e \\
            0 & \text{ if } o(de) = 2 \\
            \tfrac{\eta}{2} ( d + e - f ) & \text{ if } o(de) = 3, \text{ where } f = d^e = e^d \text{ in } G .
        \end{cases}
    \]
    By \cite[Theorem 6.5]{HRS}, $M_\eta(G,D)$ is a primitive axial algebra of Jordan type $\eta$.
\end{example}

Axial algebras of Jordan type, and more generally any type of decomposition algebras where the fusion law admits a $\Z/2$-grading, admit many involutory automorphisms, the so-called \emph{Miyamoto involutions}.
\begin{definition}
    \begin{enumerate}
        \item A \emph{$\Z/2$-grading} of a fusion law $(X, *)$ is a map $\theta \colon X \to \Z/2$ such that $x * y \subseteq \theta^{-1}(\theta(x) + \theta(y))$ for all $x,y \in X$. For instance, the Jordan fusion law from \cref{tb:jordan} is $\Z/2$-graded with $\theta(0) = \theta(1) = 0$ and $\theta(\eta) = 1$.
        \item If $(A, \I, \Omega)$ is a $\Phi$-decomposition algebra for a $\Z/2$-graded fusion law $(X,*)$, then for each $i \in \I$, we define a \emph{Miyamoto involution}
        \[ \tau_i \colon A \to A \colon a_x \mapsto (-1)^{\theta(x)} a_x, \quad \text{ when } a_x \in A^{(i)}_x . \]
        In other words, $\tau_i$ fixes the $0$-graded elements and negates the $1$-graded elements with respect to the $i$-th decomposition of $A$.
    \end{enumerate}
\end{definition}

\cref{cor:fg} below is an important motivation for the main result of our paper.

\begin{proposition}\label{pr:fg}
    Let $(G,D)$ be a $3$-transposition group. The following are equivalent:
    \begin{enumerate}[label={\rm (\alph*)}]
        \item\label{pr:fg:G} $G$ is finite.    
        \item\label{pr:fg:D} $D$ is finite.
        \item\label{pr:fg:M} $M_\eta(G,D)$ is finite-dimensional.
    \end{enumerate}
\end{proposition}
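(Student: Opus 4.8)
The plan is to establish the chain (a) $\Rightarrow$ (b) $\Rightarrow$ (a) together with the equivalence (b) $\Leftrightarrow$ (c), so that all three statements become equivalent; two of these three links are immediate. For (b) $\Leftrightarrow$ (c), recall from \cref{ex:matsuo} that $M_\eta(G,D)$ has the set $D$ as a basis, so $\dim_\F M_\eta(G,D) = |D|$; hence the algebra is finite-dimensional precisely when $D$ is finite. For (a) $\Rightarrow$ (b), simply note that $D \subseteq G$, so $|D| \leq |G|$. All the content therefore lies in the implication (b) $\Rightarrow$ (a): if the (normal) generating set $D$ is finite, then $G$ itself is finite.

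To prove (b) $\Rightarrow$ (a), I would first exploit the fact that $D$ is closed under conjugation. This makes $G$ act on the finite set $D$ by conjugation, yielding a homomorphism $\varphi \colon G \to \Sym(D)$. Its kernel is $C_G(D)$, the set of elements commuting with every element of $D$; since $D$ generates $G$, an element centralizing all of $D$ centralizes all of $G$, so this centralizer is exactly the center $Z(G)$. Thus $G/Z(G) \cong \operatorname{im}(\varphi)$ embeds into the finite group $\Sym(D)$, and so $G/Z(G)$ is finite; that is, the center of $G$ has finite index.

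The next step is to invoke Schur's theorem: if the center of a group has finite index, then its derived subgroup $G'$ is finite. It then remains to control the abelianization $G/G'$. This group is abelian and generated by the images of the elements of $D$; since $D$ is finite and each of its elements is an involution, $G/G'$ is a finitely generated abelian group of exponent dividing $2$, hence an elementary abelian $2$-group of finite rank, and in particular finite. Combining the finiteness of $G'$ with that of $G/G'$ shows that $G$ is finite, completing the implication.

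The only non-elementary ingredient is Schur's theorem; everything else is a direct manipulation of the normal generating set $D$. I do not anticipate a serious obstacle here: the one point to keep track of is that $D$ \emph{generates} $G$, which is used twice (to identify $C_G(D)$ with $Z(G)$, and to ensure that the images of $D$ generate $G/G'$), and this is guaranteed by the definition of a $3$-transposition group. It is worth noting that this argument uses neither the bound on the orders of products of elements of $D$ nor the value of $\eta$, so the equivalence holds for every $3$-transposition group and every admissible $\eta$.
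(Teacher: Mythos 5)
Your proposal is correct, but the key implication is handled by a genuinely different argument than the paper's. Both proofs reduce everything to showing that a finite (normal) generating set of involutions forces $G$ to be finite, and both conclude identically: first get $G/Z(G)$ finite, then apply Schur's theorem to get $G'$ finite, then observe that $G/G'$ is abelian and generated by finitely many involutions, hence finite. The difference is in how $G/Z(G)$ is shown to be finite. You use the conjugation action of $G$ on the finite set $D$: since $D$ is closed under conjugation and generates $G$, the kernel of $G \to \Sym(D)$ is $C_G(D) = Z(G)$, so $G/Z(G)$ embeds in the finite group $\Sym(D)$. The paper instead proves \ref{pr:fg:M}\,$\Rightarrow$\,\ref{pr:fg:G}: it first notes that $\dim M_\eta(G,D) = |D|$ is independent of $\F$ and $\eta$, so one may replace the base field by a finite field, making the algebra $A = M_\eta(G,D)$ a finite set; it then cites the fact that $G/Z(G)$ embeds in $\Aut(A)$, which is finite. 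Your route is more elementary and self-contained --- it needs neither the base-field change trick nor the external references identifying $G/Z(G)$ inside $\Aut(A)$, and it makes transparent that the Matsuo algebra plays no role in \ref{pr:fg:D}\,$\Rightarrow$\,\ref{pr:fg:G}. The paper's route, on the other hand, emphasizes the connection between the group and the algebra that motivates the whole paper (the group acting faithfully modulo center on the Matsuo algebra), which is the perspective used elsewhere, e.g.\ in \cref{cor:fg}. Both arguments are complete and correct.
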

\begin{proof}
    Of course, \ref{pr:fg:G} implies \ref{pr:fg:D}, and \ref{pr:fg:D} and \ref{pr:fg:M} are equivalent because $M_\eta(G,D)$ has dimension~$|D|$.
    In particular, the dimension of $M_\eta(G,D)$ is independent of the choice of the base field $\F$ and of $\eta \in \F$, so to show that \ref{pr:fg:M} implies \ref{pr:fg:G}, we may assume that $\F$ is a finite field.
    
    Then $A := M_\eta(G,D)$ is finite.
    By \cite[p.~325]{DR} (which relies on \cite[p.\@~92, Example (4)]{Asch-3}), $G/Z(G)$ is embedded in $\Aut(A)$, so $G/Z(G)$ is a finite group.
    By a theorem of Schur, \cite[(33.9), p.~168]{Asch}, the derived subgroup $G'$ is finite.
    Since $G/G'$ is an abelian group generated by a finite number of involutions, it is finite, so we conclude that $G$ is finite.
\end{proof}

\begin{corollary}\label{cor:fg}
    The following are equivalent:
    \begin{enumerate}[label={\rm (\alph*)}]
        \item\label{cor:fg:a} Every finitely generated $3$-transposition group $(G,D)$ is finite.
        \item\label{cor:fg:b} Every primitive axial algebra $A$ of Jordan type $\eta\neq\tfrac{1}{2}$ generated by a finite set of axes $X$ is finite-dimensional.
    \end{enumerate}
\end{corollary}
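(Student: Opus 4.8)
The plan is to shuttle back and forth between primitive axial algebras of Jordan type $\eta$ and Matsuo algebras, using \cref{pr:fg} as the bridge and exploiting the fact that for $\eta\neq\tfrac12$ the multiplication of axes is governed by a $3$-transposition structure. The only nontrivial external input I would invoke is the Hall--Rehren--Shpectorov classification of $2$-generated primitive axial algebras of Jordan type (\cite[Theorem 1.1]{HRS}), from which I would extract two facts, both valid precisely because $\eta\neq\tfrac12$: \textbf{(F1)} for any two axes $a,b$ the Miyamoto involutions satisfy $o(\tau_a\tau_b)\le 3$; and \textbf{(F2)} the product of any two axes again lies in the span of axes, concretely $ab\in\F a+\F b+\F\,\tau_b(a)$ in the generic case and $ab\in\F a+\F b$ otherwise. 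Everything else is bookkeeping around (F1) and (F2).

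To prove \ref{cor:fg:a}$\Rightarrow$\ref{cor:fg:b}, I would take a primitive axial algebra $A$ of Jordan type $\eta\neq\tfrac12$ generated by a finite set $X$ of axes and consider its Miyamoto group $G:=\langle \tau_a : a\in X\rangle$, which is finitely generated. Setting $D:=\{\tau_a^{\,g} : a\in X,\ g\in G\}$, the Miyamoto involutions are involutions, $D$ is closed under conjugation and generates $G$, and by (F1) the product of any two elements of $D$ has order at most $3$; hence $(G,D)$ is a finitely generated $3$-transposition group. By \ref{cor:fg:a} the group $G$ is finite, so the orbit $Y:=\{g(a):a\in X,\ g\in G\}$ of the finitely many axes of $X$ under the finite automorphism group $G$ is finite. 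Finally (F2) shows that $\F Y$ is closed under multiplication, since the product of two axes in $Y$ is a combination of axes of the form $\tau_b(a)$ with $\tau_b\in G$, which lie in $Y$ by $G$-invariance; thus $\F Y$ is a subalgebra containing $X$, so $\F Y=A$ and $\dim A\le |Y|<\infty$.

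To prove \ref{cor:fg:b}$\Rightarrow$\ref{cor:fg:a}, I would take a finitely generated $3$-transposition group $(G,D)$, fix any field of characteristic $\neq 2$ and any $\eta\neq 0,1,\tfrac12$ (the dimension of a Matsuo algebra and the finiteness of $G$ are independent of these choices, as in the proof of \cref{pr:fg}), and form the primitive axial algebra $A:=M_\eta(G,D)$ of \cref{ex:matsuo}. Choosing finitely many $d_1,\dots,d_n\in D$ with $\langle d_1,\dots,d_n\rangle=G$ (possible since $D$ generates the finitely generated group $G$), let $B\subseteq A$ be the subalgebra they generate. The key point is that each Miyamoto involution is a polynomial in the corresponding multiplication operator, $\tau_{d_i}=\id-\tfrac{2}{\eta(\eta-1)}\,\ad_{d_i}(\ad_{d_i}-\id)$, so $B$ is invariant under every $\tau_{d_i}$; since $\tau_{d_i}$ acts on the basis $D$ by conjugation $e\mapsto e^{d_i}$, iterating shows $\bar D:=\bigcup_i d_i^{\,G}\subseteq B$, and (F2) gives $B=\F\bar D=M_\eta(G,\bar D)$. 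Then $(G,\bar D)$ is again a finitely generated $3$-transposition group with the \emph{same} group $G$, and $B$ is generated by the finite axis set $\{d_1,\dots,d_n\}$, so \ref{cor:fg:b} yields $\dim B<\infty$, i.e.\ $|\bar D|<\infty$; applying \cref{pr:fg} to $(G,\bar D)$ then forces $G$ to be finite.

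The main obstacle is not the group-theoretic bookkeeping but securing the two structural facts (F1) and (F2), which genuinely require $\eta\neq\tfrac12$ and rest on the full description of $2$-generated primitive axial algebras of Jordan type. Indeed, for $\eta=\tfrac12$ two axes may generate an infinite-dimensional algebra with $o(\tau_a\tau_b)=\infty$, in which case $(G,D)$ is no longer a $3$-transposition group and $A$ need not be spanned by its axes; this is exactly why the equivalence is stated only for $\eta\neq\tfrac12$.
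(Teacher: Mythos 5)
Your proof is correct and at its core follows the same route as the paper: for \ref{cor:fg:a}\,$\Rightarrow$\,\ref{cor:fg:b} you show that the Miyamoto group $G$ of the generating axes is a finitely generated $3$-transposition group and then span $A$ by the finite $G$-orbit of $X$; for \ref{cor:fg:b}\,$\Rightarrow$\,\ref{cor:fg:a} you pass to a Matsuo algebra and finish with \cref{pr:fg}. The paper compresses the first direction into two citations---\cite[Theorem (5.4)]{HRS} (the Miyamoto involutions form a set of $3$-transpositions, which is your (F1)) and \cite[Corollary (1.2)]{HRS} ($A$ is spanned by the orbit of $X$ under $G$, which is what your (F2) argument re-derives)---so your (F1)/(F2) bookkeeping is a re-proof of those inputs rather than a different argument. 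There is, however, one point where your write-up is genuinely tighter than the paper's: in \ref{cor:fg:b}\,$\Rightarrow$\,\ref{cor:fg:a} the paper asserts that $M_\eta(G,D)$ is finitely generated as an algebra because finitely many $d_1,\dots,d_n\in D$ generate $G$. That inference is not quite right in general: the subalgebra generated by $d_1,\dots,d_n$ is the span of $\bar D=\bigcup_i d_i^{\,G}$, which can be a proper subset of $D$ (e.g.\ $G=\Z/2\times\Z/2$ with $D$ all three involutions and $d_1,d_2$ two of them). Your detour through $B=\F\bar D\cong M_\eta(G,\bar D)$, applying \ref{cor:fg:b} to $B$ and then \cref{pr:fg} to $(G,\bar D)$, closes this small gap cleanly; that is what your extra care buys.

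One caution: your closing paragraph misstates the role of the hypothesis $\eta\neq\tfrac12$. For $\eta=\tfrac12$ two axes can never generate an infinite-dimensional algebra---\cite[Theorem 1.1]{HRS}, quoted in the paper's introduction, bounds $2$-generated primitive axial algebras of Jordan type by dimension $3$ for \emph{every} $\eta$---and $A$ is always spanned by its axes, since \cite[Corollary (1.2)]{HRS} also holds for all $\eta$. Likewise your (F2) holds for all $\eta$: it is immediate from \cref{pr:tau-op}. The only item that genuinely requires $\eta\neq\tfrac12$ is (F1): for $\eta=\tfrac12$ the product $\tau_a\tau_b$ of two Miyamoto involutions can have large or infinite order even though the algebra they generate is at most $3$-dimensional, so the Miyamoto group need not be a $3$-transposition group. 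That, and not any failure of finite-dimensionality or spanning, is the precise reason the equivalence is restricted to $\eta\neq\tfrac12$.
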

\begin{proof}
    \begin{itemize}[itemindent=2ex]
        \item[\ref{cor:fg:a}\,$\Rightarrow$\,\ref{cor:fg:b}] 
            Let $A$ and $X$ be as in \ref{cor:fg:b}.  For $x\in X,$ let $\tau_x$ be the Miyamoto involution associated with $x.$  By \cite[Theorem (5.4), p.~105]{HRS}, the group $G = \langle \tau_x\mid x\in X\rangle$ is a $3$-transposition group.  By the assumption, $G$ is finite. By \cite[Corollary (1.2), p.~81]{HRS}, $A$~is spanned by $\{x^g\mid x\in X, g\in G\},$ so $A$ is finite-dimensional.
        \item[\ref{cor:fg:b}\,$\Rightarrow$\,\ref{cor:fg:a}]
            Let $(G,D)$ be a finitely generated $3$-transposition group.
            Then $G$ is generated by a finite number of elements from $D$, hence the algebra $M_{\eta}(G,D)$ is finitely generated.
            Thus, by the assumption, it is finite-dimensional. \Cref{pr:fg} then tells us that $G$ is finite.
        \qedhere
\end{itemize}    
\end{proof}

In order to get an idea about the complexity of the primitive $4$-generated axial algebras of Jordan type, it is useful to look at the list of $4$-generated $3$\dash transposition groups first.
In particular, this will provide us with an example of such an algebra of dimension $81$, which is precisely the upper bound that we will obtain in our main result.

\begin{theorem}
    %[{\cite[Theorem (3.3)]{CH}}]
    Let $(G,D)$ be a $3$-transposition group generated by $4$ elements from $D$ (but not by less than $4$). Then its central type is one of the following:
    \begin{enumerate}
        \item $W(A_4)$, the Weyl group of type $A_4$ (with $|D| = 10$);
        \item $W(D_4)$, the Weyl group of type $D_4$ (with $|D| = 12$);
        \item $3^3 \colon \Sym(4)$ (with $|D| = 18$);
        \item $2^{1+6} \colon \mathrm{SU}_3(2)'$ (with $|D| = 36$);
        \item Hall's $3$-transposition group $[3^{10}] \colon 2$ (with $|D| = 81$) or its affine quotient $3^{3+3} \colon 2$ (with $|D| = 27$).
    \end{enumerate}
\end{theorem}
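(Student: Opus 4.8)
The plan is to extract the list from the classification of 3-transposition groups due to Fischer and Cuypers--Hall \cite{CH, H}, the real work being to isolate those generated by exactly four elements of $D$. The natural bookkeeping device is the \emph{Fischer space} $\mathcal{F}(G,D)$, the partial linear space whose points are the elements of $D$ and whose lines are the triples $\{d,e,d^e\}$ coming from pairs with $o(de)=3$. Since $G$ is determined up to its center by $\mathcal{F}(G,D)$, it suffices to enumerate the Fischer spaces generated by four points, and the value of $|D|$ is then simply the number of points of the generated space. First I would attach to the four generators a \emph{diagram} $\Delta$ on four nodes, joining $d_i$ and $d_j$ exactly when $o(d_id_j)=3$. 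If $\Delta$ is disconnected, then generators in distinct components commute, so $G$ is a central product of 3-transposition groups generated by fewer transpositions; these reducible configurations are built from the at-most-three-generated types and yield no new indecomposable central type, so I may assume $\Delta$ is connected.

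There are only finitely many connected graphs on four nodes, and the decisive invariant is the isomorphism type of the planes of the generated Fischer space, which by the theory of Fischer spaces \cite{CH} are either dual affine planes of order $2$ or affine planes $AG(2,3)$ over $\F$. In the first case one obtains symmetric- and Weyl-type groups: the two trees on four nodes, the path $A_4$ and the star $D_4$, give $W(A_4)\cong\Sym(5)$ and $W(D_4)$, and counting the points of the generated space yields $|D|=10$ and $|D|=12$ (cases (i) and (ii)); the remaining diagrams of this type close up inside a symplectic/unitary geometry and, after one checks that four generators are genuinely needed, produce $2^{1+6}\colon\mathrm{SU}_3(2)'$ with $|D|=36$ and extraspecial center (case (iv)). In the second case some plane is an affine $AG(2,3)$, the generated space carries an $\F$-geometry over the prime field of characteristic three, and one is led to cases (iii) and (v). The non-triangular cyclic diagram on four nodes is instructive here: as a Coxeter configuration it would give an infinite affine group, so the 3-transposition condition on the full class $D$ must force extra affine relations, pushing it precisely into this second case.

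The crux is this $\F$-affine case in characteristic three. Here $G$ does not split over its center --- which is exactly why the statement records only the \emph{central type} --- so the transpositions are parameterized by $G$ modulo its center, while the commutators $[d_i,d_j]$, all of order $3$, generate a central $3$-group whose order must be determined. Running through the possibilities for the reflection part, one finds it to be either $\Sym(4)$, giving $3^3\colon\Sym(4)$ with $|D|=18$ (case (iii)), or trivial (pure inversion), giving the two groups of case (v); bounding the central $3$-group then shows that the largest extension on four affinely spanning generators is Hall's group $[3^{10}]\colon 2$ with $|D|=81$, its affine quotient $3^{3+3}\colon 2$ with $|D|=27$ arising by collapsing part of the center. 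I expect the two genuine obstacles to be, first, controlling the size of this central $3$-group so as to rule out spurious larger nilpotent examples, and second, \emph{completeness}: showing that no connected four-node diagram escapes the five listed types. The latter is not purely local diagram combinatorics and, for a fully rigorous argument, leans on the global classification of \cite{CH, H}.
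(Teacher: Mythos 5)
Your proposal and the paper diverge in an instructive way: the paper does not prove this theorem at all. Its entire ``proof'' is a pointer to the literature --- specifically to \cite[Proposition~(4.2)]{HS95}, where the classification of $4$-generated $3$-transposition groups (together with the values of $|D|$ and the definition of \emph{central type}) is recorded, with the result attributed to independent work of Zara, Hall and Moori, the earliest written source being Zara's 1984 thesis. So the benchmark here is a citation, not an argument, and your attempt to sketch the actual classification is strictly more ambitious than what the paper does.

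Judged as a proof, however, your sketch has genuine gaps, and they sit exactly where you yourself locate them. The framework is right (Fischer spaces, the dichotomy that planes are dual affine of order $2$ or affine $AG(2,3)$, the four-node diagram analysis, the reduction of disconnected diagrams to central products), but every case that carries real content is asserted rather than proved: ``the remaining diagrams \ldots{} produce $2^{1+6}\colon\mathrm{SU}_3(2)'$,'' ``running through the possibilities \ldots{} one finds,'' and above all ``bounding the central $3$-group then shows'' that the maximal extension is $[3^{10}]\colon 2$. That last bound --- that the center of the relevant $3$-group on four affinely spanning generators has order exactly $3^{10}$ and no larger nilpotent example exists --- is the deepest single fact in the statement, and nothing in your outline computes it. Since you concede that both this bound and the completeness of the case analysis ``lean on the global classification of \cite{CH, H},'' your argument, read rigorously, collapses to the same move the paper makes: citing the known classification. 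If that is the intended resolution, the efficient course is to cite the pinpoint source \cite[Proposition~(4.2)]{HS95} directly, as the paper does; if instead you want a self-contained proof, the two obstacles you flagged must actually be overcome, and that is a substantial project (essentially reproving Zara/Hall/Moori), not a finishing touch.
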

\begin{proof}
    The definition of central type, and the proof of this fact (together with the size of $D$ in each case) can be found in \cite[Proposition (4.2)]{HS95}, where the authors point out that this classification has been proven independently by Zara, Hall and Moori; the first written source seems to be Zara's (unpublished) thesis from 1984.
\end{proof}

The unique $3$-transposition group in this list attaining the upper bound $|D| = 81$ is particularly interesting because it arises as a $3$-transposition subgroup of the sporadic Fischer groups $\mathrm{Fi}_{23}$ and $\mathrm{Fi}_{24}$.
We give an explicit construction of the resulting Matsuo algebra, based on \cite[\S 4.1]{LB83}.
In fact, we had implemented this example on a computer to experiment with identities, which is how some of our ideas arose.
\begin{example}
    Let $D$ be the $4$-dimensional vector space over the field $\F_3$ (so $|D| = 81$).
    We first set
    \begin{multline*}
        (x_1, x_2, x_3, x_4) \bullet (y_1, y_2, y_3, y_4) \\
        := \bigl( x_1 + y_1,\ x_2 + y_2,\ x_3 + y_3,\ x_4 + y_4 + (x_1 y_2 - x_2 y_1) (x_3 - y_3) \bigr)
    \end{multline*}
    for all $x_i, y_i \in \F_3$.
    Next, we set
    \[ d * e := (d \bullet e) \bullet (d \bullet e) \]
    for all $d,e \in D$.
    For any $\eta \in \F \setminus \{ 0,1 \}$---recall that $\F$ is still our arbitrary base field of characteristic different from $2$---we now define an $\F$-algebra with basis $D$, and with multiplication given by
    \[
        de := \begin{cases}
            e & \text{ if } d = e \\
            \tfrac{\eta}{2} ( d + e - d * e ) & \text{ if } d \neq e .
        \end{cases}
    \]
    Then by combining \cite{LB83} with \cref{ex:matsuo}, we see that this is precisely the Matsuo algebra corresponding to Hall's $3$-transposition group $[3^{10}] \colon 2$.
\end{example}

\section{Method}

From now on, we assume that $A$ is a primitive axial algebra of Jordan type~$\eta$ generated by a finite set $S$ of axes.
\begin{definition}    
    For each $i \geq 0$, we set
    \[ S[i] := \langle \tau_{a_1} \tau_{a_2} \dotsm \tau_{a_\ell} (b) \mid \ell \leq i, a_1,\dots,a_\ell,b \in S \rangle . \]
    In particular, $S[0] = \langle S \rangle$, and the $S[i]$ form an ascending chain of subspaces of $A$.
\end{definition}

Our goal is to show that $S[n] = A$ for some $n$. The following proposition tells us that we can do this by showing that the ascending chain of the $S[i]$ stabilizes.
\begin{proposition}
    Assume that $S[n] = S[n+1]$ for some $n$.
    Then $A = S[n]$.
\end{proposition}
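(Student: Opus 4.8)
The plan is to prove that $S[n]$ is a \emph{subalgebra} of $A$. Since $S[0] = \langle S \rangle \subseteq S[n]$ already contains the generating set $S$, and $A$ is by definition generated by $S$ as an algebra, once we know that $S[n]$ is closed under multiplication it must coincide with the subalgebra generated by $S$, which is all of $A$. So everything reduces to showing $S[n]\cdot S[n] \subseteq S[n]$.

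The first step is to extract the structural consequence of the hypothesis $S[n] = S[n+1]$. Unwinding the definition, every spanning vector of $S[n+1]$ is either already a spanning vector of $S[n]$ or has the form $\tau_a(v)$ with $a \in S$ and $v$ a spanning vector of $S[n]$, so that $S[n+1] = S[n] + \sum_{a \in S} \tau_a(S[n])$. The equality $S[n]=S[n+1]$ therefore forces $\tau_a(S[n]) \subseteq S[n]$ for each $a \in S$, and since each $\tau_a$ is an involution this yields $\tau_a(S[n]) = S[n]$. Consequently the whole Miyamoto group $G = \langle \tau_a \mid a \in S \rangle$ stabilizes $S[n]$. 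I also record that every spanning vector $\tau_{a_1}\dotsm\tau_{a_\ell}(b)$ of $S[n]$ is of the form $g(b)$ with $g \in G$ and $b \in S$.

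The base case is to show, for a single generating axis $a \in S$, that $\ad_a(S[n]) \subseteq S[n]$. Here I would invoke the Jordan fusion law together with primitivity. Decomposing $x \in A$ along the $\ad_a$-eigenspaces as $x = x_1 + x_0 + x_\eta$, primitivity gives $x_1 \in \F a$, while $\tau_a$ negates exactly the $\eta$-part, so $x_\eta = \tfrac{1}{2}(x - \tau_a(x))$ and hence $ax = x_1 + \tfrac{\eta}{2}(x - \tau_a(x))$. For $x \in S[n]$ the right-hand side lies in $S[n]$: the term $x_1 \in \F a \subseteq S[0] \subseteq S[n]$, and $x, \tau_a(x) \in S[n]$ by the $G$-invariance just established. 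Thus $\ad_a$ preserves $S[n]$. Note that this computation genuinely needs primitivity, so that the $1$-eigencomponent lands in $\F a \subseteq S[n]$ rather than in an uncontrolled piece of $A$.

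The final step is to bootstrap from single axes to arbitrary elements of $S[n]$, and this is where I expect the only real obstacle to lie: in a non-associative algebra one cannot compose the operators $\ad_x$, so multiplicative closure is not automatic from the base case. The device that circumvents this is $G$-equivariance. If $\ad_x$ preserves $S[n]$ and $g \in G$, then for $y \in S[n]$ we have $g(x)\cdot y = g\bigl(x \cdot g^{-1}(y)\bigr)$, and successively $g^{-1}(y) \in S[n]$, then $x \cdot g^{-1}(y) \in S[n]$, then its image under $g$ lies in $S[n]$; hence $\ad_{g(x)}$ preserves $S[n]$ as well. Applying this to a generating axis $b$ shows that $\ad_{g(b)}$ preserves $S[n]$ for every spanning vector $g(b)$ of $S[n]$. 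Since $\{ x \in A \mid \ad_x(S[n]) \subseteq S[n] \}$ is a linear subspace of $A$ and contains a spanning set of $S[n]$, it contains all of $S[n]$, which is precisely $S[n]\cdot S[n] \subseteq S[n]$. This completes the proof that $A = S[n]$.
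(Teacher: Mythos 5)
Your proof is correct, but it takes a genuinely different route from the paper's. The paper's own proof is essentially a reduction to cited results: it identifies the closure $C$ of $S$ with the set of all elements $\taus{a_1,\dots,a_\ell}b$ ($a_i,b\in S$, $\ell\geq 0$) via \cite[Lemma 3.5]{KMS}, observes that $S[n]=S[n+1]$ propagates to $S[n]=S[\ell]$ for all $\ell\geq n$, so that $S[n]=\langle C\rangle$, and then quotes \cite[Cor.~(1.2)]{HRS}, which says that a primitive axial algebra of Jordan type is spanned by the closure of its generating axes. You instead prove multiplicative closure of $S[n]$ from scratch: stabilization gives $\tau_a(S[n])=S[n]$ for $a\in S$ and hence invariance under the Miyamoto group $G$; the eigenspace decomposition together with primitivity yields $ax = x_1 + \tfrac{\eta}{2}\bigl(x-\tau_a(x)\bigr)\in S[n]$ for $a\in S$ and $x\in S[n]$ (this identity is essentially \cref{pr:tau-op} rearranged, with $x_1=(a,x)a$); and $G$\dash equivariance of the multiplication, $g(x)\cdot y = g\bigl(x\cdot g^{-1}(y)\bigr)$, valid because Miyamoto maps are algebra automorphisms, transports this to all spanning vectors $g(b)$ of $S[n]$, whence $S[n]\cdot S[n]\subseteq S[n]$ by linearity of $x\mapsto \ad_x$. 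Since $S[n]$ is then a subalgebra containing the generating set $S$, it equals $A$. What each approach buys: the paper's proof is short and modular, delegating the spanning statement to the literature; yours is self-contained---in effect you re-derive the relevant portion of \cite[Cor.~(1.2)]{HRS} in this setting---and it makes explicit exactly where primitivity, the $\Z/2$\dash grading of the Jordan fusion law, and $\operatorname{char}\F\neq 2$ enter the argument. You were also appropriately careful on the one point where a blind version of this argument could go wrong: primitivity is only guaranteed for the generating axes $a\in S$, so the Peirce computation is done only for those, and the extension to arbitrary spanning vectors $g(b)$ goes through conjugation by automorphisms rather than through primitivity of $g(b)$ itself.
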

\begin{proof}
    Following \cite[p.\@~81]{HRS}, we define the \emph{closure} of the set $S$ of axes to be the smallest set $C$ of axes of $A$ containing $S$ such that for each $a \in C$, we have $\tau_a(C) \subseteq C$.
    In fact, $C = \{ \tau_{a_1} \tau_{a_2} \dotsm \tau_{a_\ell} (b) \mid \ell \geq 0, a_1,\dots,a_\ell,b \in S \}$; see, for instance, \cite[Lemma 3.5]{KMS}.
    It now suffices to observe that if $S[n] = S[n+1]$, then $S[n] = S[\ell]$ for all $\ell \geq n$, hence $S[n] = \langle C \rangle$.
    By \cite[Cor.~(1.2), p.\@~81]{HRS}, however, $A$ is spanned by $C$, and the result follows.
\end{proof}
From now on, when we refer to an arbitrary \emph{axis of $A$}, we will always mean an element of the closure $C$ of $S$ (which is indeed always an axis for the same fusion law).

\medskip

The following two definitions will play a crucial role.
\begin{definition}\label{def:taus}
    \begin{enumerate}
        \item 
            We let
            \[ \taus{a_1, a_2, \dots, a_\ell} := \tau_{a_1} \tau_{a_2} \dotsm \tau_{a_\ell} \]
            for all axes $a_1,\dots,a_\ell \in A$. % (Notice that we do not require $a_i \in S$.)
        \item 
            For all $x,y \in A$, we set
            \[ x \eqv{i} y \iff x - y \in S[i] . \]
            Notice that $x \eqv{i} y$ implies $x \eqv{j} y$ for all $j \geq i$, and also implies that $\taus{a_1,\dots,a_\ell}x \eqv{i+\ell} \taus{a_1,\dots,a_\ell}y$ for all $a_1,\dots,a_\ell \in S$.
    \end{enumerate}
\end{definition}

\begin{remark}
    The notation $\taus{a_1,\dots,a_\ell}$ will also be used when the $a_i$ are axes that are not necessarily contained in $S$. Some care is needed with the use of the equivalence relations $\eqv{i}$ in such a situation, as these relations are always meant with respect to the given generating set $S$.
\end{remark}

By \cite[Theorem 4.1]{HSS18}, primitive axial algebras of Jordan type always admit a (necessarily unique) normalized 
symmetric Frobenius form.
\begin{definition}
    \begin{enumerate}
        \item A bilinear form $( \cdot, \cdot) \colon A \times A \to \F$ is called a \emph{(normalized) Frobenius form} on $A$ if $(xy, z) = (x, yz)$ for all $x,y,z \in A$ and, in addition, $(a, a) = 1$ for each axis $a \in A$.
        \item It will be useful to introduce the notation
            \[ \epsilon_{x,y} := 1 - \tol (x,y) \]
            for all $x,y \in A$.
    \end{enumerate}
\end{definition}
\begin{proposition}\label{pr:tau-op}
    Let $a \in A$ be an axis and $x \in A$ be arbitrary. Then
    \[ \tau_a(x) = x + \tol (a,x) a - \tol ax . \]
\end{proposition}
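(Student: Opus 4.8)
The plan is to expand $x$ in the eigenspace decomposition for $\ad_a$ and read off $\tau_a(x)$ directly. Write $x = x_1 + x_0 + x_\eta$ with $x_\mu \in A^{(a)}_\mu$ for $\mu \in \{1,0,\eta\}$. By definition of the Miyamoto involution, $\tau_a$ fixes $x_1$ and $x_0$ (the $0$-graded part) and negates $x_\eta$ (the $1$-graded part), so $\tau_a(x) = x_1 + x_0 - x_\eta = x - 2x_\eta$. Thus everything reduces to expressing the single component $x_\eta$ in terms of $x$, $ax$, and the Frobenius form.

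First I would record the behaviour of $\ad_a$ on the components. Since $x_1, x_0, x_\eta$ are eigenvectors with eigenvalues $1,0,\eta$ respectively, applying $\ad_a$ gives $ax = x_1 + \eta\, x_\eta$, whence $x_\eta = \tfrac{1}{\eta}(ax - x_1)$. It therefore remains only to identify the $1$-eigenspace component $x_1$.

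The key step is to show that $x_1 = (a,x)\,a$, and here the Frobenius form enters. I would first observe that $\ad_a$ is self-adjoint: by commutativity and the Frobenius property $(uv,w) = (u,vw)$, we have $(au,v) = (ua,v) = (u,av)$ for all $u,v \in A$. Consequently the eigenspaces for distinct eigenvalues are orthogonal, since $u \in A^{(a)}_\mu$ and $v \in A^{(a)}_\nu$ with $\mu \neq \nu$ force $(\mu-\nu)(u,v) = (au,v) - (u,av) = 0$, and $\mu - \nu \neq 0$ in $\F$. Applying this to $a \in A^{(a)}_1$ paired against $x_0$ and $x_\eta$ yields $(a,x) = (a,x_1)$; writing $x_1 = \lambda a$ and using the normalization $(a,a) = 1$ gives $\lambda = (a,x)$, i.e.\ $x_1 = (a,x)\,a$.

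Finally I would assemble the pieces: substituting $x_1 = (a,x)\,a$ into $x_\eta = \tfrac{1}{\eta}(ax - x_1)$ and then into $\tau_a(x) = x - 2x_\eta$ gives
\[ \tau_a(x) = x - \tfrac{2}{\eta}\bigl( ax - (a,x)\,a \bigr) = x + \tol (a,x)\,a - \tol\, ax, \]
as claimed. The only genuinely non-formal input is the orthogonality of the eigenspaces—equivalently, the self-adjointness of $\ad_a$—so I expect that to be the main (though mild) obstacle, as it is the one place where the Frobenius form, rather than merely the fusion law, is needed; once it is in hand, the remainder is bookkeeping.
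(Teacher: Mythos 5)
Your proof is correct, but it is not the proof the paper gives: the paper's official proof of \cref{pr:tau-op} is a two-line citation, namely [HSS18, Lemma 3.3] combined with the identification $(a,x) = \varphi_a(x)$ from [HSS18, Theorem 4.1]. What you wrote is instead precisely the ``easy direct proof \dots\ by simply decomposing $x$ with respect to the eigenspaces for the axis $a$'' that the paper's remark immediately following the proposition says becomes available once one \emph{assumes} the existence of the Frobenius form. That assumption is the heart of the comparison: in [HSS18], the formula of the proposition (their Lemma 3.3, stated with $\varphi_a(x)$ in place of $(a,x)$) is itself an ingredient in the \emph{proof} that the Frobenius form exists, so your argument avoids circularity only because the paper imports the existence and normalization of the form as a black box beforehand --- which it does, so your route is legitimate in this context and has the merit of being self-contained. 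Two points you should make explicit. First, when you write $x_1 = \lambda a$ you are invoking \emph{primitivity} of the axis, i.e.\ $A^{(a)}_1 = \F a$; this is the only place primitivity enters (the statement is false without it), and it holds for every axis of $A$ in the paper's sense because such axes are images of generating axes under automorphisms, so primitivity is inherited. Second, your orthogonality argument requires the eigenvalues $0$, $1$, $\eta$ to be pairwise distinct elements of $\F$, i.e.\ $\eta \neq 0,1$, which is part of the standing hypotheses on the Jordan fusion law. With these spelled out, your proof is complete, and arguably more informative to the reader than the citation the paper gives.
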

\begin{proof}
    This is \cite[Lemma 3.3]{HSS18} combined with the statement from \cite[Theorem 4.1]{HSS18} that $(a, x) = \varphi_a(x)$.
%    However, the proof of \cite[Lemma 3.3]{HSS18} is rather complicated (it depends on the classification of the $2$-generated algebras) so we provide an easy direct proof instead.
%    
%    So decompose $x \in A$ as $x = x_0 + x_1 + x_\eta$ with respect to the eigenspaces for the axis $a$; then $\tau_a(x) = x_0 + x_1 - x_\eta$.
%    On the other hand, we have $ax = x_1 + \eta x_\eta$. Then
%    \[ \tau_a(x) - x + \tol ax = -2 x_\eta + \tol (x_1 + \eta x_\eta) = \tol x_1 . \]
%    Since $A$ is primitive, we can write $x_1 = \zeta a$ for some $\zeta \in \F$, so it remains to show that $\zeta = (a, x)$.
\end{proof}
\begin{remark}
    In \cite{HSS18}, their Lemma 3.3 is used, in fact, in the proof of their Theorem 4.1 (the existence of the Frobenius form).
    On the other hand, if we already \emph{assume} the existence of the Frobenius form to begin with, then there is an easy direct proof of \cref{pr:tau-op} by simply decomposing $x$ with respect to the eigenspaces for the axis $a$.
\end{remark}

%A crucial ingredient is the observation that for each axis $a \in A$, we have
%\[ \tau_a = \id + \tol \pi_a - \tol L_a , \]
%or written explicitly,
%\begin{equation}\label{eq:tau-op}
%    \tau_a(x) = x + \tol (a,x) a - \tol ax
%\end{equation}
%for all $x \in A$.
\Cref{pr:tau-op} has the following immediate but useful consequences.
\begin{corollary}\label{pr:collect}
    Let $a,b \in A$ be axes. Then:
    \begin{enumerate}
        \item \label{pr:collect:ab-ba}
            $\taus{a}b - \taus{b}a = \epsilon_{a,b} (b-a)$.
        \item \label{pr:collect:tau=L}
            If $a \in S$ and $x \in S[i]$, then $(-\tol)ax \eqv{0} \taus{a}x - x \eqv{i} \taus{a}x$.
    \end{enumerate}    
\end{corollary}
\begin{proof}
    \begin{enumerate}
        \item 
            By \cref{pr:tau-op}, we have
            \[
                \tau_a(b) - \tau_b(a) = \bigl( 1 - \tol (a,b) \bigr) (b-a) .
            \]
        \item
            This follows immediately from \cref{pr:tau-op}.
        \qedhere
    \end{enumerate}
\end{proof}
We recall the following important fact, which we will be using over and over again, often without explicitly mentioning it.
\begin{proposition}\label{pr:tautau}
    We have $\taus{a,b,a} = \taus{\tau_a(b)}$ for all axes $a,b \in A$.
\end{proposition}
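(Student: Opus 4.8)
The plan is to exploit the fact, recorded in the text preceding the definition of Miyamoto involutions, that every Miyamoto map $\tau_a$ (for $a$ an axis of $A$) is an \emph{involutory automorphism} of $A$. Once this is granted, $\taus{a,b,a} = \tau_a\tau_b\tau_a = \tau_a\tau_b\tau_a^{-1}$ is nothing but $\tau_b$ conjugated by the automorphism $\tau_a$, and the whole statement reduces to the general covariance principle that, for any $\sigma \in \Aut(A)$ and any axis $b$, the image $\sigma(b)$ is again an axis (for the same fusion law $\Phi(\eta)$) and
\[ \tau_{\sigma(b)} = \sigma\,\tau_b\,\sigma^{-1}. \]
Applying this with the involution $\sigma = \tau_a$, so that $\sigma^{-1} = \sigma$, immediately yields $\tau_{\tau_a(b)} = \tau_a\tau_b\tau_a = \taus{a,b,a}$, which is exactly the claim. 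Note that $\tau_a(b)$ lies in the closure $C$ and so is indeed an axis, in accordance with our standing convention, so that $\tau_{\tau_a(b)} = \taus{\tau_a(b)}$ makes sense.

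To prove the covariance identity I would first observe that $\ad_{\sigma(b)} = \sigma\,\ad_b\,\sigma^{-1}$: since $\sigma$ is an algebra homomorphism, $\sigma(b)\,\sigma(x) = \sigma(bx)$ for all $x$, i.e.\ $\ad_{\sigma(b)}\circ\sigma = \sigma\circ\ad_b$. Hence $\ad_{\sigma(b)}$ and $\ad_b$ are conjugate operators; in particular $\ad_{\sigma(b)}$ is again semisimple with the same eigenvalues $0,1,\eta$, and its eigenspaces are the $\sigma$-images of those of $\ad_b$, namely $A^{(\sigma(b))}_x = \sigma\bigl(A^{(b)}_x\bigr)$ for each $x \in \{0,1,\eta\}$. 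This already shows that $\sigma(b)$ is a $\Phi(\eta)$-axis. Comparing the two sides of the covariance identity on each graded eigenspace then finishes the argument: for $w \in A^{(b)}_x$, both $\tau_{\sigma(b)}$ and $\sigma\tau_b\sigma^{-1}$ send $\sigma(w)$ to $(-1)^{\theta(x)}\sigma(w)$ --- the former by the very definition of $\tau_{\sigma(b)}$ on $A^{(\sigma(b))}_x = \sigma(A^{(b)}_x)$, the latter by pulling back through $\sigma^{-1}$, applying $\tau_b$, and pushing forward again. Since $A$ is the direct sum of these eigenspaces, the two automorphisms agree everywhere.

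The argument is short, and I expect no genuine obstacle. The only point requiring a little care is the bookkeeping with the $\Z/2$-grading $\theta$: one must check that $\tau_{\sigma(b)}$ negates precisely $\sigma(A^{(b)}_\eta)$ and fixes $\sigma(A^{(b)}_0)\oplus\sigma(A^{(b)}_1)$, which is immediate from $\theta(0)=\theta(1)=0$, $\theta(\eta)=1$ together with the eigenspace identity $A^{(\sigma(b))}_x = \sigma(A^{(b)}_x)$. The real input is that $\tau_a$ is an \emph{automorphism} rather than merely a linear involution, and this is exactly what the $\Z/2$-grading of $\Phi(\eta)$ guarantees. As an alternative one could try to verify the identity by a direct computation with the closed formula of \cref{pr:tau-op}, using that $\tau_a$ is an isometry of the Frobenius form to rewrite $(\tau_a(b),x) = (b,\tau_a(x))$; however, this route also requires controlling the product $\tau_a(b)\cdot x$ and is noticeably messier than the eigenspace-transport argument above, so I would favour the conjugation approach.
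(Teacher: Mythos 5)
Your proposal is correct and is essentially the paper's own argument: the paper proves this proposition simply by citing the covariance identity $\tau_{\sigma(b)}=\sigma\tau_b\sigma^{-1}$ (Lemma 5.1 of [HRS]) together with the fact that $\tau_a\in\Aut(A)$ is an involution, which is exactly your reduction. The only difference is that you supply a self-contained proof of the cited identity via eigenspace transport ($\ad_{\sigma(b)}=\sigma\ad_b\sigma^{-1}$, hence $A^{(\sigma(b))}_x=\sigma\bigl(A^{(b)}_x\bigr)$); that argument is sound, with the single implicit step that the fusion-law condition for $\sigma(b)$ (not just semisimplicity of $\ad_{\sigma(b)}$) also follows from this eigenspace identity, which is immediate since $\sigma$ is an algebra homomorphism.
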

\begin{proof}
    This follows from \cite[Lemma 5.1, p.~103]{HRS} and the fact that $\tau_a \in \Aut(A)$.
\end{proof}

The following result is a first instance of how useful it is.
\begin{proposition}\label{pr:aba}
    Let $a,b \in S$ and $x \in A$. Then:
    \begin{enumerate}
        \item\label{pr:aba:aba} $\taus{a,b,a}x \eqv{1} x - \tol \tau_a(b) x$.
        \item\label{pr:aba:aba-bab} $\taus{a,b,a}x - \taus{b,a,b}x \eqv{1} \epsilon_{a,b} (\taus{b}x - \taus{a}x)$. In particular, if $x \in S[i]$ for some $i \geq 0$, then $\taus{a,b,a}x \eqv{i+1} \taus{b,a,b}x$. 
        \item\label{pr:aba:baba} $\taus{b,a,b,a}x \eqv{2} \taus{a,b}x + \epsilon_{a,b} (x - \taus{b,a}x)$. In particular, if $x \in S[i]$ for some $i \geq 2$, then $\taus{b,a,b,a}x \eqv{i} \taus{a,b}x - \epsilon_{a,b} \taus{b,a}x$.
    \end{enumerate}
\end{proposition}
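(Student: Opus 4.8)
I would prove the three statements in order, each feeding into the next; the only real discipline required is keeping track of the equivalence indices. For part \ref{pr:aba:aba}, the plan is to collapse the length-$3$ sequence by \cref{pr:tautau}, writing $\taus{a,b,a} = \taus{\tau_a(b)}$, and then to apply \cref{pr:tau-op} to the single axis $c = \tau_a(b)$. This gives $\taus{a,b,a}x = x + \tol(\tau_a(b),x)\,\tau_a(b) - \tol\,\tau_a(b)x$ for every $x \in A$. The middle summand is a scalar multiple of $\tau_a(b) = \taus{a}b \in S[1]$, hence lies in $S[1]$ and can be dropped modulo $\eqv{1}$, leaving exactly $x - \tol\,\tau_a(b)x$.

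For part \ref{pr:aba:aba-bab}, I apply \ref{pr:aba:aba} to $\taus{a,b,a}x$ and, after swapping $a$ and $b$, to $\taus{b,a,b}x$, and subtract; the $x$-terms cancel, leaving $-\tol(\tau_a(b) - \tau_b(a))x$ modulo $\eqv{1}$. By \itemref{pr:collect}{ab-ba} this is $-\tol\,\epsilon_{a,b}(b-a)x = \epsilon_{a,b}\bigl((-\tol\,bx) - (-\tol\,ax)\bigr)$. The first equivalence of \itemref{pr:collect}{tau=L}, i.e.\ $-\tol\,ax \eqv{0} \taus{a}x - x$, now replaces $-\tol\,bx$ and $-\tol\,ax$ by $\taus{b}x - x$ and $\taus{a}x - x$; the two copies of $x$ cancel, yielding $\epsilon_{a,b}(\taus{b}x - \taus{a}x)$. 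The ``in particular'' clause is then immediate, since for $x \in S[i]$ both $\taus{a}x$ and $\taus{b}x$ lie in $S[i+1] \supseteq S[1]$.

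For part \ref{pr:aba:baba}, the key maneuver is to fold the length-$4$ sequence into length-$3$ ones. I write $\taus{b,a,b,a} = \taus{b,a,b}\,\tau_a$ and substitute $y = \taus{a}x$ into the $(a \leftrightarrow b)$-swapped form of \ref{pr:aba:aba-bab} (legitimate because that relation holds for all $y \in A$). Using $\tau_a^2 = \id$ to simplify, the four composites collapse to $\taus{b,a,b}\taus{a}x = \taus{b,a,b,a}x$, $\taus{a,b,a}\taus{a}x = \taus{a,b}x$, $\taus{a}\taus{a}x = x$ and $\taus{b}\taus{a}x = \taus{b,a}x$, giving at once $\taus{b,a,b,a}x \eqv{1} \taus{a,b}x + \epsilon_{a,b}(x - \taus{b,a}x)$---in fact one index sharper than stated. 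For the ``in particular'' clause I expand $\epsilon_{a,b}(x - \taus{b,a}x) = \epsilon_{a,b}x - \epsilon_{a,b}\taus{b,a}x$ and observe that when $x \in S[i]$ with $i \geq 2$ the stray term $\epsilon_{a,b}x$ lies in $S[i]$ and is absorbed, as is the $S[2] \subseteq S[i]$ error.

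I expect no serious obstacle in the computations themselves, which are all one or two lines. The genuine care-points are bookkeeping: ensuring that the unwanted $\pm x$ terms cancel in \ref{pr:aba:aba-bab} and are absorbed into $S[i]$ in \ref{pr:aba:baba}, and tracking each $\eqv{i}$ index exactly, since these relations are designed to feed an induction in which a single off-by-one slip would be fatal. The one step that is not purely mechanical is the substitution $y = \taus{a}x$ in \ref{pr:aba:baba}, which trades a length-$4$ sequence for length-$3$ ones by exploiting that $\tau_a$ is an involution.
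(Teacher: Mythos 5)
Your proof is correct. Parts (i) and (ii) follow the paper's argument exactly: collapse $\taus{a,b,a}$ to $\taus{\tau_a(b)}$ by \cref{pr:tautau}, expand with \cref{pr:tau-op} and discard the term proportional to $\taus{a}b \in S[1]$; then subtract the $(a\leftrightarrow b)$-swapped version and rewrite the product terms using both parts of \cref{pr:collect}. The only divergence is part (iii). The paper applies $\tau_b$ on the left of the relation in (ii), which pushes the $S[1]$ error term into $S[2]$ and yields precisely the stated $\eqv{2}$. You instead substitute $y=\tau_a(x)$ into the $(a\leftrightarrow b)$-swapped relation (ii) --- legitimate, as you note, because (ii) is quantified over all of $A$ --- and since substitution does not move the error term (a combination of $a$, $b$, $\taus{a}b$, $\taus{b}a$) out of $S[1]$, you obtain the sharper $\eqv{1}$ equivalence, which of course implies the stated $\eqv{2}$. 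Both reductions exploit $\tau_a^2=\id$ in the same one-line way; your variant buys a marginally stronger index that the paper never needs (the ``in particular'' clause assumes $i\geq 2$ anyway), while the paper's left-composition is the mechanism it reuses systematically later, namely the rule from \cref{def:taus} that applying $\taus{a_1,\dots,a_\ell}$ turns $\eqv{i}$ into $\eqv{i+\ell}$. One trivial slip: in your last sentence of (iii) you refer to absorbing an ``$S[2]\subseteq S[i]$ error,'' but on your own sharper version the error is already in $S[1]$; either way it is absorbed, so nothing is affected.
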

\begin{proof}
    \begin{enumerate}
        \item 
            We apply \cref{pr:tautau} to $x$ and use \cref{pr:tau-op} on the right-hand side to get
            \begin{equation}\label{eq:aba}
                \taus{a,b,a}x = x + \tol (\tau_a(b), x) \taus{a}b - \tol \tau_a(b) x .
            \end{equation}
            Since $\taus{a}b \in S[1]$, the result follows.
        \item
            Interchanging $a$ and $b$ in \ref{pr:aba:aba} and subtracting gives, using \itemref{pr:collect}{ab-ba},
            \[
                \taus{a,b,a}x - \taus{b,a,b}x \eqv{1} (-\tol) \epsilon_{a,b} (b-a)x .
            \]
            By \itemref{pr:collect}{tau=L}, however,
            \[ (-\tol) (bx - ax) \eqv{0} (\taus{b}x - x) - (\taus{a}x - x) = \taus{b}x - \taus{a}x , \]
            and the result follows.
        \item
            This follows immediately by applying $\tau_b$ on \ref{pr:aba:aba-bab}.
        \qedhere
    \end{enumerate}
\end{proof}

%%%%%%%%%%%%%%%%%%%%%%%%%%%%%%%%
\begin{lemma}\label{lem:aba}
%%%%%%%%%%%%%%%%%%%%%%%%%%%%%%%%%
Let $a,b,c \in S$. Then:
\begin{enumerate}
\item\label{lem:aba:1}
$\taus{a,b}a = \epsilon_{a,b} a + b - \epsilon_{a,b} \taus{a}b \eqv{0} - \epsilon_{a,b} \taus{a}b$.
% Further, if $\taus{a}b\ne b$, then we may take $\beta=1$.

\item\label{lem:aba:2}
$\taus{a,b,a}c = \alpha c - \alpha \taus{a}b + \taus{c,a}b$ where $\alpha = \epsilon_{\tau_a(b), c} \in F$.

\item\label{lem:aba:3}
% $\taus{a,b,c}a= \alpha a+\beta b+\gamma c+\delta \taus{a}b-\epsilon_{a,c}\taus{a,b}c+\taus{c,a}b$, for some $\alpha,\beta,\gamma,\delta\in F$.
$\taus{a,b,c}a \eqv{0} \delta \taus{a}b - \epsilon_{a,c}\taus{a,b}c + \taus{c,a}b$ \ for some $\delta \in F$.
%  $\delta = - \epsilon_{\tau_a(b),c} - \epsilon_{a,b} \epsilon_{a,c} \in F$.

% \item
% $\taus{a,b,c,d,b}a \eqv{2} \taus{b,d,c,b}a - \epsilon_{a, \tau_b(d)} \taus{a, b, c} d$.
% (MOVED)
%
\end{enumerate}
\end{lemma}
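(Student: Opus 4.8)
The three parts are proved in order, each feeding into the next. For part~\ref{lem:aba:1}, the plan is to write $\taus{a,b}a = \tau_a(\taus{b}a)$ and first rewrite $\taus{b}a$ via \itemref{pr:collect}{ab-ba}, obtaining $\taus{b}a = \taus{a}b - \epsilon_{a,b}(b-a)$. Applying $\tau_a$ and using that $\tau_a$ is an involution fixing $a$ (so that $\tau_a(\taus{a}b) = b$ and $\tau_a(a) = a$) gives $\taus{a,b}a = \epsilon_{a,b}a + b - \epsilon_{a,b}\taus{a}b$ at once; the congruence $\eqv{0}$ is then immediate, since $\epsilon_{a,b}a + b \in S[0]$.

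For part~\ref{lem:aba:2}, the key tool is \cref{pr:tautau}, which replaces $\taus{a,b,a}$ by $\taus{\tau_a(b)}$, so that $\taus{a,b,a}c = \taus{d}c$ with $d := \tau_a(b)$ an axis. I would then apply \itemref{pr:collect}{ab-ba} to the pair $(d,c)$ to get $\taus{d}c = \taus{c}d + \epsilon_{d,c}(c - d)$, recognize $\taus{c}d = \taus{c,a}b$ and $\epsilon_{d,c} = \alpha$, and expand $d = \taus{a}b$ to reach the stated formula. (Here I use that $\tau_a(b)$ is again an axis, so that \cref{pr:collect} indeed applies.)

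Part~\ref{lem:aba:3} is the crux, and I would obtain it by substituting parts~\ref{lem:aba:1} and~\ref{lem:aba:2} into an expansion of $\taus{a,b,c}a$. First I rewrite $\taus{c}a$ by \itemref{pr:collect}{ab-ba} as $\taus{c}a = \taus{a}c - \epsilon_{a,c}(c - a)$, then apply $\taus{a,b}$ to both sides to get $\taus{a,b,c}a = \taus{a,b,a}c - \epsilon_{a,c}\taus{a,b}c + \epsilon_{a,c}\taus{a,b}a$. Now $\taus{a,b,a}c$ is exactly the expression treated in part~\ref{lem:aba:2}, while $\taus{a,b}a$ is treated in part~\ref{lem:aba:1}; substituting both and collecting all terms that are scalar multiples of $a$, $b$, or $c$ (which lie in $S[0]$) leaves the congruence with $\delta = -\alpha - \epsilon_{a,c}\epsilon_{a,b}$.

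The main obstacle is the bookkeeping in part~\ref{lem:aba:3}: one must track the scalars $\alpha$, $\epsilon_{a,c}$, $\epsilon_{a,b}$ and their products through the substitution and verify that every term that is a scalar multiple of $a$, $b$, or $c$ is absorbed into $S[0]$, so that only the three genuinely longer terms $\taus{a}b$, $\taus{a,b}c$ and $\taus{c,a}b$ remain. The conceptual reason this closes up is that the a priori length-$3$ contribution $\taus{a,b,a}c$ collapses, through \cref{pr:tautau} and part~\ref{lem:aba:2}, to terms of length at most~$2$, so that no length-$3$ expression survives in the final answer.
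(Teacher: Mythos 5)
Your proposal is correct and follows essentially the same route as the paper: part~(i) by applying $\tau_a$ to the identity from \itemref{pr:collect}{ab-ba}, part~(ii) by combining \cref{pr:tautau} with \itemref{pr:collect}{ab-ba} applied to the axis pair $(\tau_a(b),c)$, and part~(iii) by expanding $\taus{c}a$, applying $\taus{a,b}$, and substituting (i) and (ii). Your explicit value $\delta = -\alpha - \epsilon_{a,c}\epsilon_{a,b}$ is a correct refinement of the paper's unspecified ``for some $\delta \in F$''.
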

\begin{proof}
\begin{enumerate}
\item
By \itemref{pr:collect}{ab-ba},
\[
\taus{a,b}a=\taus{a}(\taus{b}a-\taus{a}b+\taus{a}b)=\epsilon_{a,b}\taus{a} (a-b)+b.
\]

\item
%By equation \eqref{eq:aba}, with $x=c$, we get
%\[
%\textstyle{\taus{a,b,a}c=c+\frac{2}{\lambda}(\tau_a(b),c)\taus{a}b-\frac{2}{\lambda}c\tau_a(b).}
%\]
%By equation \eqref{eq:tau-op} (with $c$ in place of $a$ and $\tau_a(b)$ in place of $x$),
%\[
%\textstyle{-\frac{2}{\lambda}c\tau_a(b)=\taus{c,a}b-\taus{a}b-\frac{2}{\lambda}(c,\tau_a(b))c.}
%\]
%This shows (ii).
Let $\alpha = \epsilon_{\tau_a(b), c}$.
By substituting $\tau_a(b)$ for $a$ and $c$ for $b$ in \itemref{pr:collect}{ab-ba}, we get
\[ \taus{\tau_a(b)}c - \taus{c,a}b = \alpha (c - \taus{a} b) . \]
The result now follows from \cref{pr:tautau}.

\item
By \itemref{pr:collect}{ab-ba}, we have
        \begin{align*}
            \taus{a,b,c}a
            &= \taus{a,b,a}c + \taus{a,b} \bigl( \taus{c}a - \taus{a}c \bigr) \\
            &= \taus{a,b,a}c + \epsilon_{a,c} \taus{a,b} (a - c),
        \end{align*}
so \ref{lem:aba:3} follows from \ref{lem:aba:1} and \ref{lem:aba:2}.
\qedhere
\end{enumerate}
\end{proof}

\section{Rewriting rules}

In this section, we will gradually build up ``rewriting rules'' that will allow us to simplify certain expressions.
As the length of the expressions increases, the proofs become more and more involved.

\begin{proposition}\label{pr:rules}
    Let $a,b,c,d \in S$. Then:
    \begin{enumerate}
        \item\label{pr:rules:a b} $\taus{a}b \eqv{0} \taus{b}a$.
        \item\label{pr:rules:ab a} $\taus{a,b}a \in S[1]$.
        \item\label{pr:rules:aba c} $\taus{a,b,a}c \eqv{1} \taus{c,a}b$.
        \item\label{pr:rules:abc a} $\taus{a,b,c}a \eqv{1} \taus{c,b}a - \epsilon_{a,c} \taus{a,b}c$
        \item\label{pr:rules:abac d} $\taus{a,b,a,c}d \eqv{1} \taus{c,d,c,a}b$.
        \item\label{pr:rules:abcdb a} $\taus{a,b,c,d,b}a \eqv{2} \taus{b,d,c,b}a - \epsilon_{a, \tau_b(d)} \taus{a, b, c} d \eqv{3} \taus{b,d,c,b}a$.
        \item\label{pr:rules:abcab d} $\taus{a,b,c,a,b}d \eqv{3} \taus{b,a,d,b,a}c$.
    \end{enumerate}
\end{proposition}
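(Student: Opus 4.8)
The plan is to prove the seven rules in order, each time rewriting a word in the Miyamoto involutions as a shorter or more symmetric one modulo an appropriate $S[i]$, using only the identities already established together with two structural facts: applying $\taus{a_1,\dots,a_\ell}$ with $a_j \in S$ sends $S[k]$ into $S[k+\ell]$, and the level-shift $x \eqv{i} y \Rightarrow \taus{a_1,\dots,a_\ell}x \eqv{i+\ell} \taus{a_1,\dots,a_\ell}y$ recorded in \cref{def:taus}. The first four rules are short. Part (i) is exactly \itemref{pr:collect}{ab-ba}, since $b-a \in S[0]$. Part (ii) is \cref{lem:aba}(i), whose reduced form $-\epsilon_{a,b}\taus{a}b$ lies in $S[1]$. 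Part (iii) is \cref{lem:aba}(ii): the two discarded terms are $\alpha c \in S[0]$ and $-\alpha\taus{a}b \in S[1]$. Part (iv) follows from \cref{lem:aba}(iii) after discarding $\delta\taus{a}b \in S[1]$ and replacing $\taus{c,a}b$ by $\taus{c,b}a$, which is legitimate because $\taus{c,a}b-\taus{c,b}a = \taus{c}\bigl(\epsilon_{a,b}(b-a)\bigr) \in S[1]$ by \itemref{pr:collect}{ab-ba}.

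Part (v) is where a genuinely new idea enters, and I would prove it by a commutativity cancellation. Applying \cref{pr:aba}(i) to $x=\tau_c(d)$ gives $\taus{a,b,a,c}d \eqv{1} \tau_c(d) - \tol\,\tau_a(b)\tau_c(d)$, and applying it with the roles of $(a,b)$ and $(c,d)$ interchanged gives $\taus{c,d,c,a}b \eqv{1} \tau_a(b) - \tol\,\tau_c(d)\tau_a(b)$. Subtracting and using that $A$ is commutative, the two product terms cancel, leaving $\tau_c(d)-\tau_a(b) = \taus{c}d - \taus{a}b \in S[1]$; hence $\taus{a,b,a,c}d \eqv{1} \taus{c,d,c,a}b$. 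This is the one place in the list where the actual multiplication is used, and only to observe that it disappears.

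For part (vi) I would first apply \cref{pr:tautau} to shorten the words. Setting $f := \tau_b(c)$ and $e := \tau_b(d)$ (both axes lying in $S[1]$), the relations $\tau_b\tau_c\tau_b = \tau_f$ and $\tau_b\tau_d\tau_b = \tau_e$ convert the claim into $\taus{a,f,e}a \eqv{2} \taus{e,f}a - \epsilon_{a,e}\taus{a,f}e$, which is \emph{formally} part (iv) for the axes $a,f,e$. The point—and the reason this is not immediate—is that $f,e \notin S$, so all the $\eqv{i}$-levels must be recomputed; one obtains $\eqv{2}$ rather than $\eqv{1}$. I would expand $\taus{a,f,e}a$ via the exact identities underlying \cref{lem:aba}(ii) and (iii), and then drive each resulting summand into $S[2]$ by re-using part (iii): for instance $\taus{b,c,b}a \eqv{1} \taus{a,b}c \in S[2]$ and $\taus{a,b,c,b}a \eqv{2} \taus{b}c \in S[1]$, so the a priori high-level terms collapse. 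Once the $\eqv{2}$-identity is in hand the final $\eqv{3}$ is automatic, because the correction term $\epsilon_{a,\tau_b(d)}\taus{a,b,c}d$ already lies in $S[3]$.

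Part (vii) is the main obstacle and by far the most laborious. I would again use \cref{pr:tautau} to replace $\taus{a,b,c,a,b}d$ by $\taus{a,f,h}d$ with $f=\tau_b(c)$, $h=\tau_b(a)$, and observe that the whole statement is invariant under the substitution $a\leftrightarrow b,\ c\leftrightarrow d$, which carries the left-hand side onto the right-hand side; so it suffices to reduce $\taus{a,f,h}d$ modulo $S[3]$ to an expression that is visibly symmetric under this substitution. Expanding with \itemref{pr:collect}{ab-ba} throws off correction terms such as $\taus{a,f}h = \taus{a,b,c}a$ and $\taus{a,f}d = \taus{a,b,c,b}d$, each of which I would force into $S[3]$ using parts (iii), (iv) and (vi)—this is precisely where the earlier rules get ``wrapped up''—leaving a single principal term to be matched against its $a\leftrightarrow b,\ c\leftrightarrow d$ image. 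The difficulty is entirely in the bookkeeping: several intermediate words naturally live in $S[4]$ or higher and must be cut back down to $S[3]$ by repeated use of the earlier rules, while one must avoid the conjugation moves that merely return the expression to its starting point. I expect the commutativity cancellation from part (v) to resurface here in order to kill the genuine product terms that appear along the way.
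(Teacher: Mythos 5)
Your treatment of parts (i)--(vi) is correct and is essentially the paper's own argument. Parts (i)--(iv) are read off from \itemref{pr:collect}{ab-ba} and \cref{lem:aba} exactly as in the paper. Your proof of (v), expanding both sides via \itemref{pr:aba}{aba} and cancelling the products $\tau_a(b)\tau_c(d)=\tau_c(d)\tau_a(b)$, is only cosmetically different from the paper's one-line application of \itemref{pr:collect}{ab-ba} to the axes $\tau_a(b)$ and $\tau_c(d)$: that corollary already has the commutativity cancellation baked into it, so both computations are the same. Your proof of (vi) is also the paper's: apply (iv) formally to the generating set $S'=\{a,\tau_b(c),\tau_b(d)\}$ and recompute levels, the whole content being that $S'[1]\subseteq S[2]$, which your sample reductions ($\taus{b,c,b}a\in S[2]$ by (iii), $\taus{a,b,c,b}a\in S[2]$) correctly establish.

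Part (vii), however, contains a genuine gap. Your preliminary reduction is fine: with $f=\tau_b(c)$ and $h=\tau_b(a)$, the correction terms $\taus{a,f}h=\taus{a,b,c}a$ and $\taus{a,f}d=\taus{a,b,c,b}d$ do lie in $S[3]$ (by (iv), resp.\ by (iii) applied inside), so $\taus{a,b,c,a,b}d=\taus{a,f,h}d\eqv{3}\taus{a,f,d}h=\taus{a,b,c,b,d,b}a$. But the step you defer to ``bookkeeping'' --- matching this principal term against its image $\taus{b,a,d,a,c,a}b$ under $a\leftrightarrow b$, $c\leftrightarrow d$ --- is not bookkeeping: the identical expansion applied to the right-hand side $\taus{b,a,d,b,a}c$ produces exactly that image, so the assertion ``principal term $\eqv{3}$ its image'' is logically equivalent to (vii) itself. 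Your plan therefore reformulates the claim rather than proving it, and none of the tools you invoke ((iii), (iv), (vi), or the commutativity cancellation of (v)) is shown to effect the matching. The paper's proof rests on two devices that never appear in your proposal. First, the collapse of the conjugated five-letter word: $\taus{b,a,c,a,b}d=\taus{\tau_b\tau_a(c)}d\eqv{0}\taus{d,b,a}c-\epsilon_{d,\tau_b\tau_a(c)}\taus{b,a}c$, which in particular puts $\taus{b,a,c,a,b}d$ in $S[3]$; applying $\taus{b,a}$ then gives $\taus{b,a,b,a,c,a,b}d\eqv{2}\taus{b,a,d,b,a}c$ once one checks $\taus{b,a,b,a}c\in S[2]$. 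Second, the ``baba'' rule \itemref{pr:aba}{baba} applied with $x=\taus{c,a,b}d\in S[3]$, which evaluates the same seven-letter word the other way: $\taus{b,a,b,a,c,a,b}d\eqv{3}\taus{a,b,c,a,b}d-\epsilon_{a,b}\taus{b,a,c,a,b}d\eqv{3}\taus{a,b,c,a,b}d$, the last step again using the collapse above. Comparing the two evaluations yields (vii). Without something playing the role of this double evaluation of $\taus{b,a,b,a,c,a,b}d$ --- in particular, without \itemref{pr:aba}{baba} --- the wrap-up you hope for cannot close.
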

\begin{proof}
    \begin{enumerate}
        \item This follows from \itemref{pr:collect}{ab-ba}.
        \item By \ref{pr:rules:a b}, we have $\taus{a,b}a \eqv{1} \taus{a,a}b = b$. (Of course, this also follows from \itemref{lem:aba}{1}.)
        \item This follows from \itemref{lem:aba}{2}.
        \item This follows from \itemref{lem:aba}{3} and \ref{pr:rules:a b}.
        \item We have
            \[ \taus{a,b,a,c}d - \taus{c,d,c,a}b = \taus{\tau_a(b)} \tau_c(d) - \taus{\tau_c(d)} \tau_a(b) ,\]
            which is contained in $\langle \tau_a(b) - \tau_c(d) \rangle \leq S[1]$ by \itemref{pr:collect}{ab-ba}.
        \item We have
            \[ \taus{a,b,c,d,b}a = \taus{a, \tau_b(c), \tau_b(d)}a . \]
            Now let $S' = \{ a, \tau_b(c), \tau_b(d) \}$ and apply \ref{pr:rules:abc a} with respect to this set $S'$ in place of $S$. Notice that $S'[1] \leq S[2]$, because
            \begin{align*}
                &\taus{\tau_b(c)}a = \taus{b,c,b}a \in S[2] \quad \text{(by \ref{pr:rules:aba c})}, \\
                &\taus{\tau_b(c)}\tau_b(d) = \taus{b,c,b,b}d = \taus{b,c}d \in S[2] ,
            \end{align*}
            so we see that indeed $\taus{x}y \in S[2]$ for all $x,y \in S'$.
            Hence
            \begin{align*}
                \taus{a, \tau_b(c), \tau_b(d)}a
                &\eqv{2} \taus{\tau_b(d), \tau_b(c)}a - \epsilon_{a, \tau_b(d)} \taus{a, \tau_b(c)} \tau_b(d) \\
                &= \taus{b,d,c,b}a - \epsilon_{a, \tau_b(d)} \taus{a, b, c} d
            \end{align*}
            so we conclude that indeed
            \[ \taus{a,b,c,d,b}a \eqv{2} \taus{b,d,c,b}a - \epsilon_{a, \tau_b(d)} \taus{a, b, c} d \eqv{3} \taus{b,d,c,b}a . \]
        \item
            We start from
            \begin{align*}
                \taus{b,a,c,a,b}d
                &= \taus{\tau_b\tau_a(c)}d \\
                &= \taus{d} \tau_b\tau_a(c) + \epsilon_{d, \tau_b \tau_a(c)} (d - \tau_b \tau_a(c)) \\
                &= \taus{d,b,a}c + \epsilon_{d, \tau_b \tau_a(c)} (d - \taus{b,a}c) \\
                &\eqv{0} \taus{d,b,a}c - \epsilon_{d, \tau_b \tau_a(c)} \taus{b,a}c.
            \end{align*}
            In particular, $\taus{b,a,c,a,b}d \in S[3]$.
            Moreover, applying $\taus{b,a}$ to this equivalence yields
            \begin{align}
                \taus{b,a,b,a,c,a,b}d
                &\eqv{2} \taus{b,a,d,b,a}c - \epsilon_{d, \tau_b \tau_a(c)} \taus{b,a,b,a}c \notag \\
                &\eqv{2} \taus{b,a,d,b,a}c , \label{eq:babacabd}
            \end{align}
            where the last equivalence holds because by \ref{pr:rules:aba c} and \ref{pr:rules:abc a}, we have
            \[ \taus{b,a,b,a}c \eqv{2} \taus{b,c,a}b \in S[2] . \]
            
            We now apply \itemref{pr:aba}{baba} with $x = \taus{c,a,b}d \in S[3]$, which gives
            \begin{align*}
                \taus{b,a,b,a,c,a,b}d
                &\eqv{3} \taus{a,b,c,a,b}d - \epsilon_{a,b} \taus{b,a,c,a,b}d \\
                &\eqv{3} \taus{a,b,c,a,b}d .
            \end{align*}
            The claim follows by combining this with \eqref{eq:babacabd}.
        \qedhere
    \end{enumerate}
\end{proof}

For our next rewriting rule in \cref{pr:finalrule}, we first need the following lemma.
		
%%%%%%%%%%%%%%%%%%%%%%%%%%%%%%%%%%%%%%%
\begin{lemma}\label{lem:S'}
%%%%%%%%%%%%%%%%%%%%%%%%%%%%%%%%5
Let $a,b,c,d \in S$ and let $S'=\{a,b,c,\tau_a(d)\}$. Then $S'[3]\subseteq S[4]$.
\end{lemma}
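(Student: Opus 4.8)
The plan is to reduce every spanning element $\taus{x_1,\dots,x_\ell}x_{\ell+1}$ of $S'[3]$ (with $\ell\le 3$ and all $x_i\in S'$) into $S[4]$ by a finite case analysis organised around the ``bad'' generator $e:=\tau_a(d)$. The three generators $a,b,c$ already lie in $S$, while $e\in S[1]$ and, crucially, $\tau_e=\tau_a\tau_d\tau_a$ by \cref{pr:tautau}, so that $\tau_e$ is an involution. First I would record the facts that get used constantly: $\taus{s}e=\taus{s,a}d\in S[2]$ and, by \cref{pr:rules}(iii), $\taus{e}s=\taus{a,d,a}s\eqv1\taus{s,a}d\in S[2]$ for $s\in\{a,b,c\}$, together with $\taus{e}e=e$; this already yields $S'[1]\subseteq S[2]$. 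Next I would dispose of every word in which $\tau_e$ does \emph{not} occur among the operators: such a word is literally a word of length $\le 3$ over $S$, carrying at most one extra factor $\taus{a}$ from a trailing $e=\taus{a}d$, hence lies in $S[4]$. So the whole content of the lemma concerns words containing at least one operator $\tau_e$.

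Two simplifications cut this down sharply. The identity $\tau_e^2=\id$ collapses adjacent (and then all repeated) $\tau_e$'s, so in length $\le 3$ only a single $\tau_e$ survives, or the palindromic pattern $\taus{e,s,e}$; the latter is handled at once by \cref{pr:tautau}, which rewrites $\taus{e,s,e}$ as a single Miyamoto map $\taus{\tau_e(s)}$ with $\tau_e(s)\in S[2]$, after which \cref{pr:collect}(i) swaps it past the remaining symbol into $S[4]$. For a single $\tau_e$ there are three positions. When $\tau_e$ is \emph{innermost}, $\taus{s_1,s_2,e}w=\taus{s_1,s_2}\bigl(\tau_e(w)\bigr)$ with $\tau_e(w)=\taus{e}w\in S[2]$, and two further operators from $S$ land us in $S[4]$. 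When $\tau_e$ is \emph{outermost}, I would use $\taus{e,s_1,s_2}w=\taus{a,d}\bigl(\taus{a,s_1,s_2}w\bigr)$ and analyse the inner length-$3$ word: whenever a cancellation occurs (a flanking generator equals $a$, the two flanking generators coincide, or the tail duplicates the adjacent operator) this inner word already lies in $S[2]$, most often via \cref{pr:rules}(iv) or \cref{lem:aba}, and then $\taus{a,d}$ keeps us in $S[4]$.

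The remaining and genuinely delicate cases are those where $\tau_e$ is flanked by two \emph{distinct} generators of $\{b,c\}$, neither of them $a$ (e.g.\ $\taus{e,b,c}w$ or $\taus{b,e,c}w$): the naive expansion then has length $5$, and no cancellation or palindrome is available a priori. Here the intended move is a two-step rewrite: first apply the rotation rule \cref{pr:rules}(v) to the length-$4$ block $\taus{a,d,a,s}$, which produces a word containing either a palindromic triple---collapsed by \cref{pr:tautau} to a single Miyamoto map attached to an axis in $S[2]$, finished by \cref{pr:collect}(i)---or a block of the form $\taus{q,p,q,p}$, which is reduced by the ``baba'' rule \cref{pr:aba}(iii). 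I expect this last family to be the main obstacle, since it is the only situation forcing the effective length up to $5$, and closing it requires selecting exactly the right rotation so that the subsequent collapse brings one back into $S[4]$; the finiteness of the choices for the flanking generators in $\{a,b,c\}$ is what ultimately guarantees that every such word admits such a reduction.
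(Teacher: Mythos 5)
Your overall architecture is the same as the paper's: an exhaustive case analysis of the spanning elements $\taus{x,y,z}w$ with $x,y,z,w \in S'$, organised by the positions of $e=\tau_a(d)$, using the same toolkit ($\tau_e=\taus{a,d,a}$, \cref{pr:tautau}, \itemref{pr:collect}{ab-ba}, and the rules of \cref{pr:rules} and \cref{lem:aba}). Your easy cases are all handled correctly: no $\tau_e$ among the operators, repeated $\tau_e$'s, the palindrome $\taus{e,s,e}$, innermost $\tau_e$, and the cancellation subcases of the outer and middle positions. The problem is your final paragraph, which is exactly where the content of the lemma sits: it is a plan, not a proof. You write that you ``expect'' the remaining family to close and that ``the finiteness of the choices \dots\ ultimately guarantees'' a reduction exists. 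Finiteness guarantees only that the case list terminates, not that each case succeeds; each remaining word must actually be reduced, and you reduce none of them.

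Moreover, the plan as stated cannot be executed in the cases that matter most. \itemref{pr:rules}{abac d} is a statement about five letters of $S$, so ``rotating the block $\taus{a,d,a,s}$'' is legitimate only when that block is applied directly to a generator: it works for $\taus{b,e,c}w$ with $w\in S$ (and there your rotation-plus-\itemref{pr:aba}{baba} mechanism is indeed what the paper does), but it cannot even be invoked when the target is $e$ itself. Those cases are forced on you: for $\taus{b,e,c}e$ the paper uses \itemref{lem:aba}{1} applied to the axis $\tau_a(d)$, and for $\taus{e,b,c}e=\taus{a,d,a,b,c,a}d$ your outermost reduction gives $\taus{a,d}\bigl(\taus{a,b,c,a}d\bigr)$, which only lands in $S[6]$, because $\taus{a,b,c,a}d$ is a generic element of $S[4]$ --- it is one of the $\Gamma_4$-generators in \cref{thm:gamma}, not an element of $S[2]$; the paper disposes of this word with \itemref{pr:rules}{abcdb a}, a rule your proposal never invokes. (These cases *can* alternatively be closed with your swap-and-collapse mechanism, e.g.\ writing $\tau_c(\tau_a(d))=\tau_{\tau_a(d)}(c)+\epsilon_{c,\tau_a(d)}(\tau_a(d)-c)$ and letting the two copies of $\tau_{\tau_a(d)}$ annihilate, but the correct swap differs from case to case and must be exhibited; nothing in your text identifies it.) Until this handful of words --- $\taus{b,e,c}e$, $\taus{e,b,c}e$, $\taus{e,b,c}a$, $\taus{e,b,c}b$ and their relabelings --- is actually reduced into $S[4]$, the lemma is not proved.
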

\begin{proof}
Let $\taus{x,y,z}w$ be any element with $x,y,z,w \in S'$. 
Of course, if none of these four elements is equal to $\tau_a(d)$, then $\taus{x,y,z}w \in S[3] \subseteq S[4]$, and if all four elements are equal to $\tau_a(d)$, then $\taus{x,y,z}w = \tau_a(d) \in S[1] \subseteq S[4]$.

\begin{case}{1}
    Suppose that only one of these four elements is equal to $\tau_a(d)$.    
\end{case}

If $w=\tau_a(d)$, then $\taus{x,y,z}w = \taus{x,y,z,a}d \in S[4]$.
If $z=\tau_a(d)$, then, by \itemref{pr:rules}{aba c},
\[
\taus{x,y,z}w=\taus{x,y,a,d,a}w\in S[4].
\]
If $y=\tau_a(d)$, then, by \itemref{pr:rules}{abac d},
\[
\taus{x,y,z}w=\taus{x,a,d,a,z}w\eqv{2}\taus{x,z,w,z,a}d.
\]
If $x=a$ or $z=a$, then $\taus{x,y,z}w\in S[4]$.
If $w=a$, then $\taus{x,y,z}w\in S[4]$, by \itemref{pr:rules}{ab a}.
We may thus assume that $z=b$ and $w=c$.
If $x=b$, then we see that $\taus{x,y,z}w\in S[4]$.
If $x=c$, then
\[ \taus{x,y,z}w \eqv{2} \taus{c,b,c,b,a}d \in S[3], \]
by \itemref{pr:aba}{baba}.

If $x=\tau_a(d)$, then, assuming without loss that $y=b$,
\[
\taus{x,y,z}w=\taus{a,d,a,b,z}w .
\]
If $z=a$, then by \itemref{pr:rules}{aba c}, $\taus{x,y,z}w\in S[4]$.
Hence we may assume $z=c$, and by \itemref{pr:rules}{ab a} we may assume that $w=a$.
In this case, \itemref{pr:rules}{abc a} shows that $\taus{x,y,z}w\in S[4]$.

\begin{case}{2}
    Suppose that three of the four elements $x,y,z,w$ are equal to $\tau_a(d)$.
\end{case}

If $x = y = z = \tau_a(d)$, then of course $\taus{x,y,z}w = \taus{\tau_a(d)}w = \taus{a,d,a}w \in S[2]$.
For the other cases, we simply observe that
\begin{align*}
    &\taus{\tau_a(d), x, \tau_a(d)}\tau_a(d) = \taus{\tau_a(d), x}\tau_a(d) = \taus{a,d,a,x,a}d \in S[4], \\
    &\taus{\tau_a(d),\tau_a(d),x}\tau_a(d)=\taus{x,a}d\in S[2], \\
    &\taus{x,\tau_a(d),\tau_a(d)}\tau_a(d)=\taus{x,a}d\in S[2].
\end{align*}

\begin{case}{3}
    Exactly two of the four elements $x,y,z,w$ are equal to $\tau_a(d)$.
\end{case}

We have
\begin{align*}
    &\taus{\tau_a(d),\tau_a(d),z}w=\taus{z}w\in S[1], \\
    &\taus{x,\tau_a(d),\tau_a(d)}w=\taus{x}w\in S[1], \text{ and} \\
    &\taus{x,y,\tau_a(d)}\tau_a(d)=\taus{x,y,a}d\in S[3].
\end{align*}
If $x = w = \tau_a(d)$ and $y,z \in \{ a,b,c \}$, then, by \ref{pr:rules:abcdb a},
\[ 
\taus{\tau_a(d), y, z}\tau_a(d) = \taus{a,d,a,y,z,a}d\eqv{4}\taus{a,a,z,y,a}d\in S[3]. 
\]
Next, if $x = w = \tau_a(d)$ and $y,z \in \{ a,b,c \}$, then,
by Lemma \ref{lem:aba}(ii) (with $\tau_a(d)$ in place of $a$), $\taus{\tau_a(d), y, \tau_a(d)}w\in S[4]$.

Finally, if $y = w = \tau_a(d)$ and $x,z \in \{ a,b,c \}$, then, by Lemma \ref{lem:aba}(i)
\[ 
\taus{x, \tau_a(d), z}\tau_a(d)\in S[4].\qedhere
\]
\end{proof}

The following corollary will play an important role in the proof of \cref{pr:keyrule}.
\begin{corollary}\label{co:T4 in S6}
    Let $a,b,c,d \in S$ and let $T = \{ a, \tau_a(b), \tau_a(c), d \}$.
    Then $T[4] \subseteq S[6]$.
\end{corollary}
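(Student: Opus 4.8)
The plan is to exploit a hidden symmetry: the set $T$ is nothing but $\tau_a$ applied to the set $S' = \{a,b,c,\tau_a(d)\}$ from \cref{lem:S'}. Indeed, since $\tau_a(a)=a$ and $\tau_a$ is an involution, we have $\tau_a(\tau_a(b)) = b$ and $\tau_a(\tau_a(c)) = c$, so $\tau_a(T) = S'$, i.e.\ $T = \tau_a(S')$. Because $\tau_a$ is an involutive automorphism of $A$, \cref{pr:tautau} gives $\tau_{\tau_a(s)} = \tau_a \tau_s \tau_a$ for every axis $s$; substituting the generators $t_j = \tau_a(s_j)$ of $T$ and telescoping the inner $\tau_a\tau_a$ factors yields
\[ \taus{t_1,\dots,t_\ell}t_{\ell+1} = \tau_a \bigl( \taus{s_1,\dots,s_\ell}s_{\ell+1} \bigr) . \]
Hence $T[i] = \tau_a(S'[i])$ for all $i$; in particular $T[4] = \tau_a(S'[4])$, and it suffices to prove $\tau_a(S'[4]) \subseteq S[6]$.

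Next I would split $S'[4]$ by length. Every generator of $S'[4]$ of length at most $3$ lies in $S'[3] \subseteq S[4]$ by \cref{lem:S'}, so its image under $\tau_a = \taus{a}$ lies in $\taus{a}(S[4]) \subseteq S[5] \subseteq S[6]$. The only remaining generators are the length-$4$ ones $\taus{s_1,s_2,s_3,s_4}w$ with $s_i, w \in S'$. For these I write
\[ \tau_a\bigl( \taus{s_1,s_2,s_3,s_4}w \bigr) = \taus{a}\,\taus{s_1}\bigl( \taus{s_2,s_3,s_4}w \bigr), \]
and note that the tail $\taus{s_2,s_3,s_4}w$ again lies in $S'[3] \subseteq S[4]$ by \cref{lem:S'}. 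Thus everything comes down to understanding the leading operator $\taus{a}\taus{s_1}$ for each choice of $s_1 \in S'$.

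The point is that $\taus{a}\taus{s_1}$ always collapses to a product of at most two Miyamoto involutions of elements of $S$: if $s_1 = a$ it is the identity; if $s_1 \in \{b,c\}$ it is $\taus{a,s_1}$; and if $s_1 = \tau_a(d)$ then, using $\tau_{\tau_a(d)} = \tau_a\tau_d\tau_a$ from \cref{pr:tautau}, the outer $\tau_a$ cancels one inner copy and we are left with $\taus{a}\taus{\tau_a(d)} = \tau_d\tau_a = \taus{d,a}$. In each case the leading operator, applied to $\taus{s_2,s_3,s_4}w \in S[4]$, lands in $S[6]$, which completes the argument. The one place to be careful --- and the only real content beyond bookkeeping --- is precisely this last case: one must keep the outer $\tau_a$ attached to the leading involution rather than first bounding $S'[4]$ and then applying $\tau_a$ separately. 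Indeed $\taus{\tau_a(d)} = \taus{a,d,a}$ alone would only send $S[4]$ into $S[7]$; the saving of one step comes entirely from the cancellation $\taus{a}\taus{a,d,a} = \taus{d,a}$.
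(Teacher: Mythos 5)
Your proof is correct and takes essentially the same route as the paper: both identify $T = \tau_a(S')$, deduce $T[i] = \tau_a(S'[i])$ from \cref{pr:tautau}, invoke \cref{lem:S'} to get $S'[3] \subseteq S[4]$, and win through the cancellation $\taus{a}\taus{a,d,a} = \taus{d,a}$, which is exactly the one-step saving the paper exploits. The only difference is bookkeeping: the paper bounds $S'[4]$ set-wise (inside $\langle S[5], \taus{a,d,a}S[4]\rangle$) before applying $\tau_a$, whereas you collapse the leading pair of involutions element by element --- the underlying computation is identical.
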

\begin{proof}
    Let $S'=\{a,b,c,\tau_a(d)\}$ as in \cref{lem:S'} and notice that $T = \tau_a(S')$, i.e., $T$ is obtained from $S'$ by applying
		$\tau_a$ on each element.  By \cref{pr:tautau}, for all $x_1,\dots,x_k,y\in S'$ we have
		\[
		\taus{\tau_a(x_1),\dots,\tau_a(x_k)}\tau_a(y)=\taus{a,x_1,\dots,x_k,a}\tau_a(y)=\tau_a(\taus{x_1,\dots,x_k}y),
		\]
%    Since $\tau_a$ is an automorphism of the algebra $A$, this implies that 
so $T[i] = \tau_a(S'[i])$ for all $i$.
    
    By \cref{lem:S'}, we have $S'[3] \subseteq S[4]$. Now
    \begin{align*}
        S'[4]
        &= \taus{a} S'[3] \cup \taus{b} S'[3] \cup \taus{c} S'[3] \cup \taus{\tau_a(d)} S'[3] \\
        &\subseteq S[5] \cup \taus{a,d,a} S[4] ,
    \end{align*}
    and hence
    \[ T[4] = \taus{a} S'[4] \subseteq \taus{a} S[5] \cup \taus{d,a} S[4] \subseteq S[6] . \qedhere \]
\end{proof}

\begin{proposition}\label{pr:finalrule}
    Let $a,b,c,d \in S$. Then
    \[ \taus{a,b,c,a,b,c}d \eqv{4} \taus{b,c,a,b,c,a}d \eqv{4} \taus{c,a,b,c,a,b}d . \]
\end{proposition}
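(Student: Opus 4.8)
The plan is to first reduce the double equivalence to a single cyclic shift. Since $S[4]$ is a subspace, $\eqv 4$ is transitive, so it suffices to prove the single statement
\[ \taus{a,b,c,a,b,c}d \eqv 4 \taus{b,c,a,b,c,a}d \qquad (\ast) \]
for all $a,b,c,d\in S$: applying $(\ast)$ after the cyclic relabeling $a\mapsto b\mapsto c\mapsto a$ (legitimate, since the proposition quantifies over all $a,b,c,d\in S$ and the filtration $S[\cdot]$ does not single out these elements) immediately gives $\taus{b,c,a,b,c,a}d\eqv 4\taus{c,a,b,c,a,b}d$, and chaining the two yields the full claim.

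The engine for $(\ast)$ will be the length-$5$ rewriting rule \itemref{pr:rules}{abcab d}. First I would peel off the leading involution and apply this rule to the resulting length-$5$ word: since $\taus{b,c,a,b,c}d\eqv 3\taus{c,b,d,c,b}a$ by \itemref{pr:rules}{abcab d}, and reattaching $\tau_a$ raises the modulus by one (\cref{def:taus}), I obtain
\[ \taus{a,b,c,a,b,c}d \eqv 4 \taus{a,c,b,d,c,b}a . \]
The gain is that the right-hand side now \emph{acts on its own first letter}: it has the shape $\taus{X,Y,Z,d,Y,Z}X$ with $(X,Y,Z)=(a,c,b)$. Running the identical computation on $\taus{b,c,a,b,c,a}d$ collapses it to the same shape with $(X,Y,Z)=(b,a,c)$, so $(\ast)$ becomes the comparison of two expressions of the form $\taus{X,Y,Z,d,Y,Z}X$, acting on their first letter.

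To finish, I would transfer the problem to the conjugated generating set $T=\{a,\tau_a(b),\tau_a(c),d\}$ of \cref{co:T4 in S6}, using \cref{pr:tautau} to absorb the $\tau_a$-conjugations. Over $T$ the two reduced expressions are again cyclic shifts of a single period-$3$ word, and I would now bring their lengths down using the length-\emph{reducing} rules \itemref{pr:rules}{abc a}, \itemref{pr:rules}{abcdb a} and \itemref{pr:aba}{baba}; these are the only rules that genuinely shorten an expression, whereas \itemref{pr:rules}{abcab d} is reversible and merely cycles the word, so it cannot close the argument on its own. The residual error terms are short words over $T$, and I would pull them back into the $S$-filtration via \cref{co:T4 in S6}, i.e.\ $T[4]\subseteq S[6]$ together with its lower analogues coming from the same case analysis as \cref{lem:S'}, to conclude that $\taus{a,c,b,d,c,b}a-\taus{b,a,c,d,a,c}b\in S[4]$.

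The hard part will be precisely this last step: controlling the error terms tightly enough to land in $S[4]$ rather than merely in $S[6]$. A naive reduction is lossy — swapping or multiplying past a length-$5$ operator costs several units in the filtration — so the proof must exploit the period-$3$ structure to force the length-$4$ words over $T$ (which a priori lie only in $S[6]$ by \cref{co:T4 in S6}) to cancel in pairs, leaving a residue of length-$\le 2$ words over $T$ that genuinely sit in $S[4]$. Organizing this cancellation, and checking each case of the reduction much as in the proof of \cref{lem:S'}, is where essentially all of the work will be concentrated.
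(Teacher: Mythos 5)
Your opening moves are correct and coincide with the paper's own strategy: reducing the double equivalence to a single cyclic shift via transitivity and relabeling is exactly how the paper finishes, and your application of \itemref{pr:rules}{abcab d} after peeling off the leading involution is valid, giving $\taus{a,b,c,a,b,c}d \eqv{4} \taus{a,c,b,d,c,b}a$ and $\taus{b,c,a,b,c,a}d \eqv{4} \taus{b,a,c,d,a,c}b$. But from that point on you have a plan rather than a proof, and the plan has concrete defects. First, the two reduced expressions are \emph{not} words over $T=\{a,\tau_a(b),\tau_a(c),d\}$: conjugating, $\taus{a,c,b,d,c,b}a = \taus{\tau_a(c),\tau_a(b),\tau_a(d),\tau_a(c),\tau_a(b)}a$, which involves $\tau_a(d)\notin T$, so it is a word over $\tau_a(S)$, while $\taus{b,a,c,d,a,c}b$ is likewise a word over $\tau_b(S)$ --- two different conjugate sets, so they are not cyclic shifts of one period-three word over a common generating set. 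Worse, the natural next move, applying \itemref{pr:rules}{abcab d} over $\tau_a(S)$ (legitimate, since $\tau_a(S)[3]=\tau_a(S[3])\subseteq S[4]$), sends $\taus{\tau_a(c),\tau_a(b),\tau_a(d),\tau_a(c),\tau_a(b)}a$ straight back to $\taus{a,b,c,a,b,c}d$: as you yourself observe, this rule is reversible, so your engine is circular precisely where progress is needed. Second, \cref{co:T4 in S6} only gives $T[4]\subseteq S[6]$, which is useless for a statement mod $S[4]$ unless the $S[6]$-level terms cancel \emph{identically}; you hope they ``cancel in pairs'' but offer no mechanism, and in fact no such cancellation is needed or used in the paper.

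The missing ingredients are the paper's two actual devices. The key one is to apply \itemref{pr:rules}{abcab d} not over $S$, nor over a fully conjugated set, but over the \emph{partially} conjugated set $S'=\{a,b,c,\tau_a(d)\}$, for which \cref{lem:S'} gives the sharp bound $S'[3]\subseteq S[4]$; this produces the genuinely new equivalence $\taus{b,c,a,b,c,a}d \eqv{4} \taus{c,b,a,d,a,c,b}a$, a seven-letter word acting on $a$. The second is a bridge: applying $\taus{c,b,a,d}$ to \itemref{lem:aba}{3} (with $b$ and $c$ interchanged) expands $\taus{c,b,a,d,a,c,b}a$ into three terms, of which $\taus{c,b,a,d,a}c$ and $\taus{c,b,a,d,a,c}b$ land in $S[4]$ by \itemref{pr:rules}{aba c}, \itemref{pr:rules}{abac d} and \itemref{pr:aba}{baba}, while the third, $\taus{c,b,a,d,b,a}c$, equals $\taus{c,a,b,c,a,b}d$ mod $S[4]$ by one more application of \itemref{pr:rules}{abcab d}. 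Neither the use of $S'$ with \cref{lem:S'} nor the \itemref{lem:aba}{3} bridge appears in your proposal, and without something playing their role the reduction you set up cannot be closed.
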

\begin{proof}
    Let $S' = \{a,b,c,\tau_a(d)\}$.
    By \cref{lem:S'}, we have $S'[3] \subseteq S[4]$.
    We can thus apply \itemref{pr:rules}{abcab d} with respect to $S'$ to get
    \[ \taus{c,b,\tau_a(d),c,b}a \eqv{4} \taus{b,c,a,b,c}\tau_a(d), \]
    hence
    \begin{equation}\label{eq:pf1} 
        \taus{c,b,a,d,a,c,b}a \eqv{4} \taus{b,c,a,b,c,a}d .
    \end{equation}
    On the other hand, we apply $\taus{c,b,a,d}$ to the equivalence in \itemref{lem:aba}{3} (with $b$ and $c$ interchanged) to get
    \begin{multline*}
        \taus{c,b,a,d,a,c,b}a \\ \eqv{4}\delta\taus{c,b,a,d,a}c-\epsilon_{a,b}\taus{c,b,a,d,a,c}b+\taus{c,b,a,d,b,a}c
    \end{multline*}
    for some $\delta \in F$.
    Now $\taus{c,b,a,d,a}c\in S[4]$ by \itemref{pr:rules}{aba c}.
    Also, by \itemref{pr:rules}{abac d} and \itemref{pr:aba}{baba}, we have
    \[ \taus{c,b,a,d,a,c}b\eqv{3}\taus{c,b,c,b,c,a}d\in S[4] . \]
    Thus, by \itemref{pr:rules}{abcab d},
    \begin{equation}\label{eq:pf2}
        \taus{c,b,a,d,a,c,b}a\eqv{4}\taus{c,b,a,d,b,a}c\eqv{4}\taus{c,a,b,c,a,b}d.
    \end{equation}
    Combining \eqref{eq:pf1} and \eqref{eq:pf2}, we see that
    \[ \taus{b,c,a,b,c,a}d\eqv{4}\taus{c,a,b,c,a,b}d. \]
    It now suffices to cyclically permute $a$$,b$$,c$ to also get the other equivalence.
\end{proof}

We now come to the final and most challenging rewriting rule, which will effectively put a bound on the dimension of $4$-generated primitive axial algebras of Jordan type.

\begin{proposition}\label{pr:keyrule}
    Let $a,b,c,d \in S$. Then
    $\taus{d,a,b,c,a,b,c}d\in S[6]$.
\end{proposition}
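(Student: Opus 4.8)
The plan is to transport everything to the auxiliary generating set $T=\{a,\tau_a(b),\tau_a(c),d\}$ of \cref{co:T4 in S6}, whose whole point is that $T[4]\subseteq S[6]$; this is the subspace that will absorb all error terms. The first observation is the operator identity $\taus{a,b,c}=\taus{\tau_a(b),\tau_a(c),a}$, immediate from \cref{pr:tautau} (use $\tau_{\tau_a(b)}=\tau_a\tau_b\tau_a$, then cancel the trailing $\tau_a\tau_a$). Composing it with itself and prepending $\tau_d$ gives
\[
\taus{d,a,b,c,a,b,c}d=\taus{d,\tau_a(b),\tau_a(c),a,\tau_a(b),\tau_a(c),a}d ,
\]
so the left-hand side is a word over $T$ with $d$ occurring both at the front and as the argument. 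From now on every equivalence should be read modulo $T[4]\subseteq S[6]$: it then suffices to rewrite this element, up to such errors, as a word of length at most $6$ applied to a single axis, since such a word automatically lies in $S[6]$.

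The proof will follow the two-chain strategy of \cref{pr:finalrule}, one level higher. In \cref{pr:finalrule}, the length-$7$ element $\taus{c,b,a,d,a,c,b}a$ was shown on the one hand to reduce to a length-$6$ word via \itemref{pr:rules}{abcab d} applied with respect to $S'=\{a,b,c,\tau_a(d)\}$ (so the error sat in $S'[3]\subseteq S[4]$ by \cref{lem:S'}), and on the other hand to a second length-$6$ word by feeding the permuted \itemref{lem:aba}{3} into a length-$4$ prefix and then clearing the residual terms with \itemref{pr:rules}{aba c}, \itemref{pr:rules}{abac d} and \itemref{pr:aba}{baba}. I would run the exact analogue, with $T$ in the role of $S'$ and $T[4]\subseteq S[6]$ in the role of $S'[3]\subseteq S[4]$.

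Concretely, first I would use the folds $\tau_a\tau_x\tau_a=\tau_{\tau_a(x)}$ together with the cyclic rotations furnished by \cref{pr:finalrule} to rewrite $\taus{d,a,b,c,a,b,c}d$ as a length-$5$ word over $T$ exhibiting an \itemref{pr:rules}{abcab d} (or \itemref{pr:rules}{abcdb a}) pattern in which exactly one letter is a conjugate generator $\tau_a(\cdot)$; this single conjugate letter is what makes the $T$-length-$5$ word have $S$-length $7$, exactly as $\tau_a(d)$ did in \cref{pr:finalrule}. Applying that rule with respect to $T$ collapses the word to $S$-length $6$ with error in $T[3]\subseteq S[6]$, giving the first expression. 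For the second chain I would peel off a length-$4$ prefix so that the remaining suffix has the shape $\taus{a,\ast,\ast}a$ of \itemref{lem:aba}{3}, substitute, and then check that each resulting piece is again either a length-$\le 6$ word on an axis or lands in $T[4]\subseteq S[6]$, using \itemref{pr:rules}{aba c}, \itemref{pr:rules}{abac d} and \itemref{pr:aba}{baba} to discard the short residual terms. Comparing the two expressions then forces $\taus{d,a,b,c,a,b,c}d\in S[6]$.

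The main obstacle is index bookkeeping. Each outer Miyamoto involution raises the equivalence index by one, so a careless reduction of a length-$7$ element deposits its error in $S[7]$ rather than $S[6]$. The entire difficulty is therefore to arrange every rewriting step so that its error is produced \emph{with respect to $T$} and at index at most $4$, so that \cref{co:T4 in S6} pulls it back into $S[6]$; this is why the reductions must be carried out on the $T$-word and ``from the inside'', never by simply prepending $\tau_d$ to an already-reduced length-$6$ identity. Matching the period-$3$ pattern of the length-$7$ word to the length-$5$ left-hand sides of \itemref{pr:rules}{abcab d} and \itemref{pr:rules}{abcdb a}, while keeping exactly one conjugate letter and never overshooting the index, is the delicate heart of the argument.
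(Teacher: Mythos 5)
There is a genuine gap. Your one concrete step --- the fold identity $\taus{a,b,c}=\taus{\tau_a(b),\tau_a(c),a}$, which is correct --- converts $\taus{d,a,b,c,a,b,c}d$ into $\taus{d,\tau_a(b),\tau_a(c),a,\tau_a(b),\tau_a(c),a}d$. But writing $x=\tau_a(b)$, $y=\tau_a(c)$, this is the word $\taus{d,x,y,a,x,y,a}d$ over $T=\{a,x,y,d\}$: it is \emph{exactly} the statement of \cref{pr:keyrule} with respect to $T$. So the reduction, as specified, is circular: resolving it would require the very proposition being proved, applied to $T$ --- and even that would only place the element in $T[6]$, which \cref{co:T4 in S6} does not control (only $T[4]\subseteq S[6]$ is known; $T[5]$ and $T[6]$ are not). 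No established rule (\cref{pr:rules}, \cref{pr:finalrule}) handles a length-$7$ word modulo the fourth filtration level, and, as you yourself observe, applying \cref{pr:finalrule} to the inner length-$6$ subword after peeling off the leading $\tau_d$ deposits the error in $\tau_d(T[4])\subseteq T[5]$, overshooting the budget. You name this obstacle (``never overshooting the index \dots is the delicate heart of the argument'') but do not overcome it: everything past the fold identity is a plan, not a proof, and the plan's target shape (``a length-$5$ word over $T$ with exactly one conjugate letter'') is never reached by any exhibited computation.

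The fix, which is what the paper actually does, lies in a different choice of auxiliary set: take $T=\{\tau_d(a),\tau_d(b),c,d\}$, i.e.\ conjugate by the \emph{outer} letter $\tau_d$ rather than by $\tau_a$ (this is also an instance of \cref{co:T4 in S6}, after renaming). The payoff is that the $T$-length-$6$ word $\taus{\tau_d(a),\tau_d(b),c,\tau_d(a),\tau_d(b),c}d$ expands in $S$-letters to $\taus{d,a,b,d,c,d,a,b,d,c}d$, so the leading $\tau_d$ is already built into a word of genuine \cref{pr:finalrule} shape over $T$; that proposition applied to $T$ then yields, modulo $T[4]\subseteq S[6]$, an identity between two expressions, with no prepending of $\tau_d$ ever needed. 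What remains --- and this is the bulk of the paper's proof --- are two explicit computational steps: showing $\taus{\tau_d(a),\tau_d(b),c,\tau_d(a),\tau_d(b),c}d \eqv{6} \taus{d,a,c,b,a,c,b}d$, and showing $\taus{c,\tau_d(a),\tau_d(b),c,\tau_d(a),\tau_d(b)}d\in S[6]$, each a chain of roughly half a dozen applications of \itemref{lem:aba}{1}--\itemref{lem:aba}{3}, \cref{pr:rules} and \cref{pr:aba} with the equivalence index tracked at every step; one concludes by swapping $b$ and $c$. Your proposal contains none of these computations, so the heart of the proof is missing.
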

\begin{proof}
    Let
    \[ T=\{\tau_d(a),\tau_d(b),c,d\}. \]
    By \cref{co:T4 in S6}, we have $T[4] \subseteq S[6]$.
    By \cref{pr:finalrule} applied to $T$, this implies that 
    \begin{equation}\label{eq:main0}
        \taus{\tau_d(a),\tau_d(b),c,\tau_d(a),\tau_d(b),c}d\eqv{6}\taus{c,\tau_d(a),\tau_d(b),c,\tau_d(a),\tau_d(b)}d.
    \end{equation}
    We will proceed in two steps: We first show that
    \begin{equation}\label{eq:main1}
        \taus{\tau_d(a),\tau_d(b),c,\tau_d(a),\tau_d(b),c}d\eqv{6}\taus{d,a,c,b,a,c,b}d,
    \end{equation}
    and then we show that
    \begin{equation}\label{eq:main2}
        \taus{c,\tau_d(a),\tau_d(b),c,\tau_d(a),\tau_d(b)}d\in S[6].
    \end{equation}
    Interchanging the role of $b$ and $c$, it will then follow from \eqref{eq:main0}, \eqref{eq:main1} and~\eqref{eq:main2} that $\taus{d,a,b,c,a,b,c}d\in S[6]$.

    \begin{step}{1}
        Proof of \eqref{eq:main1}.
    \end{step}

    By \itemref{lem:aba}{1} applied on $\taus{d,c}d$, we have
    \begin{multline*}
        \taus{\tau_d(a),\tau_d(b),c,\tau_d(a),\tau_d(b),c}d \\
        \begin{aligned}
            &=\taus{d,a,b,d,c,d,a,b,d,c}d \\
            &=\taus{d,a,b,d,c,d,a,b}c + \epsilon_{c,d} \taus{d,a,b,d,c,d,a,b}d
        \end{aligned} \\
        - \epsilon_{c,d} \taus{d,a,b,d,c,d,a,b,d}c .
    \end{multline*}
    Now let $\gamma = -\epsilon_{c, \tau_d(b)}$; then by \itemref{pr:rules}{abcdb a} and \itemref{pr:rules}{abac d}, we have
    \begin{align*}
        \taus{d,a,b,d,c,d,a,b,d}c
        &\eqv{6} \taus{d,a,b,d,d,b,a,d}c + \gamma\taus{d,a,b,d,c,d,a}b \\
        &\eqv{0} \gamma\taus{d,a,b,d,c,d,a}b \\
        &\eqv{4} \gamma\taus{d,a,b,a,b,a,d}c \in S[6],
    \end{align*}
    by Proposition \ref{pr:aba}(iii).

    Also, by Proposition \ref{pr:rules}(iv), we have
    \begin{multline*}
        \taus{d,a,b,d,c,d,a,b}d \\
        \begin{aligned}
            &\eqv{6}\taus{d,a,b,d,c,b,a}d - \epsilon_{b,d}\taus{d,a,b,d,c,d,a}b\\
            &\eqv{6}\taus{d,a,b,d,c,b,d}a - \epsilon_{b,d}\taus{d,a,b,a,b,a,d}c\quad\text{(by \ref{pr:rules}\ref{pr:rules:a b} and \ref{pr:rules}\ref{pr:rules:abac d})} \\
            &\eqv{6}\taus{d,a,d,b,a,d,b}c - \epsilon_{b,d}\taus{d,a,b,a,b,a,c}d\quad \text{(by \ref{pr:rules}\ref{pr:rules:abcab d} and \ref{pr:rules}\ref{pr:rules:a b})} \\
            &\ \in S[6],  
        \end{aligned}
    \end{multline*}
    by \cref{pr:finalrule} and \itemref{pr:aba}{baba}.

    Finally, by \itemref{pr:aba}{aba-bab}, 
    \begin{multline*}
        \taus{d,a,b,d,c,d,a,b}c \\
        \begin{aligned}
            &\eqv{6} \taus{d,a,b,c,d,c,a,b}c \\
            &\eqv{6} \taus{d,a,b,c,d,b,a}c - \epsilon_{b,c}\taus{d,a,b,c,d,c,a}b \quad\text{(by \ref{pr:rules}\ref{pr:rules:abc a})} \\
            &\eqv{6}\taus{d,a,b,c,d,b,c}a - \epsilon_{b,c}\taus{d,a,b,a,b,a,c}d\quad\text{(by \ref{pr:rules}\ref{pr:rules:a b} and \ref{pr:rules}\ref{pr:rules:abac d})} \\
            &\eqv{5}\taus{d,a,c,b,a,c,b}d \quad\text{(by \ref{pr:rules}\ref{pr:rules:abcab d} and \ref{pr:aba}\ref{pr:aba:baba}).}
        \end{aligned}
    \end{multline*}
    This proves \eqref{eq:main1}.
 
    \begin{step}{2}
        Proof of \eqref{eq:main2}.
    \end{step}

    By Lemma \ref{lem:aba}(iii), there exists $\delta\in F$ with
    \begin{multline*}
        \taus{c,\tau_d(a),\tau_d(b),c,\tau_d(a),\tau_d(b)}d \\
        \begin{aligned}
            &= \taus{c,d,a,b,d,c,d,a,b}d\\
            &\eqv{6} \delta\taus{c,d,a,b,d,c,d}a - \epsilon_{b,d}\taus{c,d,a,b,d,c,d,a}b
        \end{aligned} \\
        + \taus{c,d,a,b,d,c,b,d}a .
    \end{multline*}
    Now by \itemref{pr:rules}{aba c}, $\taus{c,d,a,b,d,c,d}a\in S[6]$.
    By \itemref{pr:rules}{abac d} and \itemref{pr:aba}{baba}, we have
    \[ \taus{c,d,a,b,d,c,d,a}b\eqv{5}\taus{c,d,a,b,a,b,a,d}c\in S[6] . \]
    Finally, by \itemref{pr:rules}{abcab d} and \cref{pr:finalrule}, we also have
    \begin{align*}
        \taus{c,d,a,b,d,c,b,d}a
        &\eqv{6} \taus{c,d,a,d,b,a,d,b}c \\
        &\eqv{6} \taus{c,d,d,b,a,d,b,a}c
        = \taus{c,b,a,d,b,a}c \in S[6].
    \end{align*}
    This proves \eqref{eq:main2} and thus finishes the proof of this proposition.
\end{proof}

\section{$4$-generated primitive axial algebras of Jordan type}

We are now ready to prove our main result.
Although it requires some care to write down the proof, the hard work has already been done in \cref{pr:rules,pr:finalrule,pr:keyrule}.
\begin{theorem}\label{thm:gamma}
    Assume that $A$ is generated by a set $S = \{ a,b,c,d \}$ of $4$ axes. 
	Then $A = S[6]$ and $A$ is at most $81$-dimensional.
	
    More precisely, let $G$ be the group $\operatorname{Sym}(S)$ of all permutations of $S$.
    Define%
    \footnote{There is some obvious abuse of notation here: a priori, the group $G$ does not act on~$A$, so when we write an expression like $\{ \taus{a,b,c}d \}^G$, we really mean $\{ \taus{a^\rho, b^\rho, c^\rho}(d^\rho) \mid \rho \in G \}$.}
    \begin{align*}
        \Gamma_0 &= \{ a \}^G , \\
        \Gamma_1 &= \{ \taus{a}b \}^G , \\
        \Gamma_2 &= \{ \taus{a,b}c \}^G , \\
        \Gamma_3 &= \{ \taus{a,b,c}d \}^G , \\
        \Gamma_4 &= \{ \taus{a,b,a,c}d,\ \taus{a,b,c,a}d \}^G , \\
        \Gamma_5 &= \{ \taus{a,b,c,a,b}d \}^G , \\
        \Gamma_6 &= \{ \taus{a,b,c,a,b,c}d \}^G .
    \end{align*}
    Then for each $i \in \{ 1,\dots,6 \}$, we have $S[i] = \langle \Gamma_0, \dots, \Gamma_i \rangle$.
    In particular, $A = \langle \Gamma_0, \dots, \Gamma_6 \rangle$.
    
    Moreover, there is some redundancy in these spanning sets: The dimension of each of the $S[i]$ is at most $4$, $10$, $22$, $34$, $61$, $73$ and $81$, respectively.
\end{theorem}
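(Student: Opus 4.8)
The plan is to establish the three assertions in turn, and throughout to exploit the symmetry under $G = \Sym(S)$: since the chain $(S[i])$ and all of the rewriting rules in \cref{pr:rules,pr:finalrule,pr:keyrule} are invariant under relabeling the four generators, it suffices to verify each claim on one $G$-orbit representative and then close up under $G$ (this is the abuse of notation in the footnote). For the spanning statement $S[i] = \langle \Gamma_0, \dots, \Gamma_i \rangle$ I would induct on $i$, using the recursive description $S[i] = S[i-1] + \sum_{x \in S} \taus{x} S[i-1]$ (valid since each $\tau_x$ is linear). The inclusion $\langle \Gamma_0, \dots, \Gamma_i \rangle \subseteq S[i]$ is immediate because every representative of $\Gamma_j$ is a word of length $j \le i$. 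For the reverse inclusion, the inductive hypothesis gives $S[i-1] = \langle \Gamma_0, \dots, \Gamma_{i-1} \rangle$, so it is enough to show $\taus{x} g \in \langle \Gamma_0, \dots, \Gamma_i \rangle$ for each $x \in S$ and each representative $g$ of $\Gamma_{i-1}$; prepending $\taus{x}$ to a representative of $\Gamma_j$ with $j < i-1$ only yields elements of $S[j+1] \subseteq S[i-1]$. This reduces everything to a finite case analysis driven by \cref{pr:rules} and \cref{pr:aba}: for each of the four choices of $x$ one prepends $\taus{x}$ to the canonical word and rewrites it, modulo $S[i-1]$, into a combination of $\Gamma_i$-words. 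When $x$ agrees with the first letter the word collapses into $S[i-1]$; when the new word is already (a $G$-image of) a $\Gamma_i$-word one is done; and the remaining cases are handled by one of the rules. The analysis is systematic but grows lengthy for $i = 5, 6$, where the deeper rules \itemref{pr:rules}{abcdb a} and \itemref{pr:rules}{abcab d} come into play.

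Granting the spanning statement, I would deduce $A = S[6]$. By the proposition of \S 3 (stabilization: $S[n] = S[n+1]$ forces $A = S[n]$) it suffices to prove $S[7] = S[6]$, i.e.\ that $\taus{e} g \in S[6]$ for all $e \in S$ and all generators $g$ of $\langle \Gamma_0, \dots, \Gamma_6 \rangle = S[6]$. For $g$ a representative of $\Gamma_j$ with $j \le 5$ this is automatic, as $\taus{e} g \in S[j+1] \subseteq S[6]$, so the question concentrates entirely on $\Gamma_6$, with representative $\taus{a,b,c,a,b,c} d$. Here the four choices of $e$ split cleanly. For $e = d$ (the target letter), $\taus{d,a,b,c,a,b,c} d \in S[6]$ is exactly \cref{pr:keyrule}. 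For $e \in \{a,b,c\}$ I would first use the cyclic symmetry of \cref{pr:finalrule} to rotate the length-$6$ word so that its first letter is $e$; prepending $\taus{e}$ then cancels via $\tau_e^2 = \id$, leaving a word of length $5$ lying in $S[5] \subseteq S[6]$ (the rotation introduces only an $S[5]$-error). By $G$-symmetry the same holds for every element of the orbit $\Gamma_6$, whence $S[7] = S[6]$ and $A = S[6]$.

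For the dimension bounds I would count each spanning set modulo the very relations already used. Each $\Gamma_i$ is a single $G$-orbit (two orbits for $i = 4$), and $\dim S[i] - \dim S[i-1]$ is bounded by the number of orbit words that remain distinct modulo $S[i-1]$. The swap relation $\taus{x} y \eqv{0} \taus{y} x$ of \itemref{pr:rules}{a b}, pushed inward by prepending common prefixes, identifies words differing by a transposition of free letters and already cuts $\Gamma_1, \Gamma_2, \Gamma_3$ down to $6, 12, 12$ new dimensions; the cyclic identity of \cref{pr:finalrule} identifies the three rotations of each length-$6$ word, cutting $\Gamma_6$ to $8$; and the pairing identities \itemref{pr:rules}{abac d} and \itemref{pr:rules}{abcab d} collapse the length-$4$ and length-$5$ orbits. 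A careful tally of these identifications, including the interaction of the two orbit-types at length $4$, produces the successive new contributions $4, 6, 12, 12, 27, 12, 8$, hence the bounds $4, 10, 22, 34, 61, 73, 81$; in particular $\dim A = \dim S[6] \le 81$.

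The step I expect to be the main obstacle is not any single rewriting computation (the essential work is already contained in \cref{pr:rules,pr:finalrule,pr:keyrule}) but rather the bookkeeping that ties them together. At lengths $5$ and $6$ the case analysis for the spanning statement branches substantially and one must check that each branch really terminates in $\langle \Gamma_0, \dots, \Gamma_i \rangle$; and at length $4$ the reduction to exactly $27$ new dimensions is delicate, since it requires tracking precisely how the two orbit-types $\taus{a,b,a,c} d$ and $\taus{a,b,c,a} d$ are related to each other, and to $S[3]$, under the available relations. Once this accounting is organized — most naturally by fixing orbit representatives and recording the reductions as in \cref{thm:gamma} — both the spanning equalities and the dimension bounds follow.
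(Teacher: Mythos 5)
Your proposal is correct and follows essentially the same route as the paper's own proof: an induction on $i$ that prepends each generator to the $G$-orbit representatives of $\Gamma_{i-1}$ and reduces via \cref{pr:rules} and \cref{pr:aba}, the wrap-up $S[7]=S[6]$ via the cyclic identity of \cref{pr:finalrule} together with \cref{pr:keyrule}, and the identical dimension accounting ($4,6,12,12,27,12,8$ new dimensions, with the length-$4$ orbit $\{\taus{a,b,a,c}d\}^G$ collapsing into $8$-tuples while $\{\taus{a,b,c,a}d\}^G$ contributes all $24$). The only difference is that you leave the case analyses at lengths $5$ and $6$ as bookkeeping to be carried out, which is exactly what the paper's proof does explicitly.
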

\begin{proof}
    For each $i \leq 6$, let $T[i]$ be the subspace of $A$ spanned by $\Gamma_0,\dots,\Gamma_i$.
    Obviously, we have $T[i] \leq S[i]$ for each $i$.
    We will show recursively that for each $i \leq 6$, $S[i] = T[i]$, and that $S[7] = S[6]$.
    We will, at the same time, compute the maximal possible dimension of each~$T[i]$.
    Notice that for each~$i$, the subspace $S[i+1]$ is spanned by $S[i]$ and all elements obtained by applying the four operations $\taus{a}$, $\taus{b}$, $\taus{c}$ and $\taus{d}$ on the elements of $S[i]$.
    In order to go from $S[i] = T[i]$ to the next step $S[i+1]$, it will suffice, by $G$-symmetry, to apply these four operations on the given representative of the set~$\Gamma_i$.
    
    \begin{itemize}
        \item[$i=0$.]
            Obviously, $T[0] = \langle a,b,c,d \rangle = S[0]$, and $\dim S[0] \leq 4$.
        \item[$i=1$.]
            We have $\taus{a}a = a \in S[0]$, whereas applying any of the other three operations $\taus{b}$, $\taus{c}$, $\taus{d}$ on the representative $a \in \Gamma_0$ results in an element of $\Gamma_1$, so $S[1] \leq T[1]$.
            
            By \itemref{pr:rules}{a b}, we have $\taus{b}a \eqv{0} \taus{a}b$, so the $12$ possible elements of $\Gamma_1$ come in pairs that are linearly dependent modulo~$S[0]$. Hence $\dim S[1] \leq 4 + 12/2 = 10$.
        \item[$i=2$.]
            We have $\taus{a,a}b = b \in S[0]$ and $\taus{b,a}b \in S[1]$ by \itemref{pr:rules}{ab a}.
            On the other hand, $\taus{c,a}b$ and $\taus{d,a}b$ belong to $\Gamma_2$ and hence to $T[2]$. Hence $S[2] \leq T[2]$.
            
            By \itemref{pr:rules}{a b}, we have $\taus{a,b}c \eqv{1} \taus{a,c}b$, so the $24$ possible elements of $\Gamma_2$ come in pairs that are linearly dependent modulo~$S[1]$. Hence $\dim S[2] \leq 10 + 24/2 = 22$.
        \item[$i=3$.]
            We have $\taus{a,a,b}c = \taus{b}c \in S[1]$, and we have $\taus{b,a,b}c \in S[2]$ by \itemref{pr:rules}{aba c} and $\taus{c,a,b}c \in S[2]$ by \itemref{pr:rules}{abc a}.
            On the other hand, $\taus{d,a,b}c$ belongs to $\Gamma_3$ and hence to $T[3]$. Hence $S[3] \leq T[3]$.
            
            By \itemref{pr:rules}{a b}, we have $\taus{a,b,c}d \eqv{2} \taus{a,b,d}c$, so the $24$ possible elements of $\Gamma_3$ come in pairs that are linearly dependent modulo~$S[2]$. Hence $\dim S[3] \leq 22 + 24/2 = 34$.
        \item[$i=4$.]
            We have $\taus{a,a,b,c}d = \taus{b,c}d \in S[2]$.
            On the other hand, $\taus{b,a,b,c}d$ and $\taus{c,a,b,c}d$ belong to $\Gamma_4$ and hence to $T[4]$. Finally, $\taus{d,a,b,c}d \eqv{3} \taus{d,a,b,d}c \in \Gamma_4$, so $\taus{d,a,b,c}d$ belongs to $\langle S[3], \Gamma_4 \rangle \leq T[4]$.
            Hence $S[4] \leq T[4]$.
            
            By \itemref{pr:rules}{abac d}, the $24$ possible elements of $\{ \taus{a,b,a,c}d \}^G$ come in $8$-tuples that are pairwise linearly dependent modulo~$S[3]$:
            \begin{multline*}
                \taus{a,b,a,c}d \eqv{1} \taus{c,d,c,a}b \eqv{3} \taus{c,d,c,b}a \eqv{1} \taus{b,a,b,c}d \\
                    \eqv{3} \taus{b,a,b,d}c \eqv{1} \taus{d,c,d,b}a \eqv{3} \taus{d,c,d,a}b \eqv{1} \taus{a,b,a,d}c .
            \end{multline*}
            On the other hand, there are no such equivalences between the $24$ possible elements of $\{ \taus{a,b,c,a}d \}^G$. Hence $\dim S[4] \leq 34 + 24/8 + 24 = 61$.
        \item[$i=5$.]
            First, because $\taus{a,b,a,c}d$ is $3$-equivalent to an element beginning with any of the generators $a,b,c,d$, we see that applying any of the four operators $\taus{a}$, $\taus{b}$, $\taus{c}$, $\taus{d}$ on this element will result in an element already contained in $S[3]$.
            
            Next, we apply these operators on $\taus{a,b,c,a}d$. Of course, we again have $\taus{a,a,b,c,a}d \in S[3]$. Next, by \itemref{pr:aba}{aba-bab} and \itemref{pr:rules}{aba c}, we have $\taus{b,a,b,c,a}d \eqv{3} \taus{a,b,a,c,a}d \in S[4]$, and by \itemref{pr:rules}{abcdb a}, we have $\taus{d,a,b,c,a}d \in S[4]$. Finally, $\taus{c,a,b,c,a}d \in \Gamma_5$. Hence $S[5] \leq T[5]$.
            
            By \itemref{pr:rules}{abcab d}, the $24$ possible elements of $\Gamma_5$ come in pairs that are linearly dependent modulo $S[4]$. Hence $\dim S[5] \leq 61 + 24/2 = 73$.
        \item[$i=6$.]
            We have $\taus{a,a,b,c,a,b}d = \taus{b,c,a,b}d \in S[4]$. By \itemref{pr:aba}{aba-bab} and \itemref{pr:rules}{abac d}, we have
            \[ \taus{b,a,b,c,a,b}d \eqv{4} \taus{a,b,a,c,a,b}d \eqv{3} \taus{a,b,b,d,b,a}c \in S[4] . \]
            Next, $\taus{c,a,b,c,a,b}d \in \Gamma_6$, and finally, by \itemref{pr:rules}{abcab d}, we also have $\taus{d,a,b,c,a,b}d \eqv{4} \taus{d,b,a,d,b,a}c \in \Gamma_6$.
            Hence $S[6] \leq T[6]$.
            
            By \cref{pr:finalrule}, the $24$ possible elements of $\Gamma_6$ come in triples that are pairwise linearly dependent module $S[5]$.
            Hence $\dim S[6] \leq 73 + 24/3 = 81$.
        \item[$i=7$.]
            We have $\taus{a,a,b,c,a,b,c}d \in S[5]$, and by \cref{pr:finalrule}, it follows that also $\taus{b,a,b,c,a,b,c}d$ and $\taus{c,a,b,c,a,b,c}d$ belong to $S[5]$.
            Finally, by \cref{pr:keyrule}, we also have $\taus{d,a,b,c,a,b,c}d \in S[6]$.
			We conclude that $S[7]=S[6]$, and therefore $A = S[6]$.
        \qedhere
    \end{itemize}
\end{proof}

\bibliographystyle{alpha}
\bibliography{4-generated}

\end{document}